\theoremstyle{plain}
\newtheorem {lemma}{Lemma}
\newtheorem {proposition}[lemma]{Proposition}
\newtheorem {theorem}[lemma]{Theorem}
\theoremstyle{definition}
\newtheorem{definition}[lemma]{Definition}
\newcommand{\Z}{\mathbb Z}
\newcommand{\C}{\mathbb C}
\newcommand{\M}{\operatorname{\mathbb M}}
\newcommand{\ann}{\operatorname{ann}}
\newcommand{\so}{\mathbf{s}}
\newcommand{\ra}{\mathbf{r}}
\newcommand{\ol}{\overline}
\title{Baer and Baer *-ring characterizations of Leavitt path algebras}
\author{Roozbeh Hazrat}
\address{Centre for Research in Mathematics,
Western Sydney University,
Australia}\email{r.hazrat@westernsydney.edu.au}
\author{Lia Va\v s}
\address{Department of Mathematics, Physics and Statistics, University of the Sciences, Philadelphia, PA 19104, USA}
\email{l.vas@usciences.edu}
\thanks{We would like to thank Hannes Thiel for his insightful comments on $C^*$-algebras and the referee for a careful reading, encouraging comments and useful suggestions. The first author would like to acknowledge Australian Research Council grants DP150101598 and DP160101481. A part of this work was done at the University of M\"unster, where the first author was a Humboldt Fellow.}
\subjclass[2000]{16S10, 16W10, 16W50, 16D70} 
\keywords{Annihilator, Leavitt path algebra, Baer ring, Rickart ring, graded ring, involutive ring} 
\begin{document}



\begin{abstract}
We characterize Leavitt path algebras which are Rickart, Baer, and Baer $*$-rings in terms of the properties of the underlying graph. In order to treat non-unital Leavitt path algebras as well, we generalize these annihilator-related properties to locally unital rings and provide a more general characterizations of Leavitt path algebras which are locally Rickart, locally Baer, and locally Baer $*$-rings. Leavitt path algebras are also graded rings and we formulate the graded versions of these annihilator-related properties and characterize Leavitt path algebras having those properties as well. 

Our characterizations provide a quick way to generate a wide variety of examples of rings. For example, creating a Baer and not a Baer $*$-ring, a Rickart $*$-ring which is not Baer, or a Baer and not a Rickart $*$-ring, is straightforward using the graph-theoretic properties from our results. In addition, our characterizations showcase more properties which distinguish behavior of Leavitt path algebras from their $C^*$-algebra counterparts. For example, while a graph $C^*$-algebra is Baer (and a Baer $*$-ring) if and only if the underlying graph is finite and acyclic, a Leavitt path algebra is Baer if and only if the graph is finite and no cycle has an exit, and it is a Baer $*$-ring if and only if the graph is a finite disjoint union of graphs which are finite and acyclic or loops. 
\end{abstract}
 
\maketitle

\section{Introduction}

Leavitt path algebras associated to directed graphs were introduced in \cite{aap05, Ara_Moreno_Pardo} as the algebraic counterparts of the graph $C^*$-algebras and as generalizations of Leavitt algebras. The study of Leavitt path algebras grew rapidly and several directions of research emerged. One of these directions is the characterization of the ring-theoretic properties
of a Leavitt path algebra $L_K(E)$ in terms of the graph-theoretic properties of the graph $E$, i.e. results of the form 
\begin{center}
{\em $L_K(E)$ has (ring-theoretic) property $(P)$ if and only if  $E$ has (graph-theoretic) property $(P').$ }
\end{center}
While relevant in its own right, this line of research has also become a way to create rings with various predetermined properties. Namely, by choosing suitable graphs, one can produce prime rings which are not primitive (Kaplansky's Conjecture~\cite{abr}), simple rings which are not purely infinite simple (\cite{aap06}), and strongly graded rings which are not crossed-products (\cite{Hazrat_graded}), to mention just a few applications. The characterization theorems  
have been formulated and proven for a number of ring-theoretic properties: being simple, purely infinite simple, hereditary, exchange, semisimple, artinian, noetherian, directly finite, to name some of them, and, in particular, von Neumann regular (\cite{Abrams_Rangaswamy}), right semihereditary and right nonsingular (\cite{Ara_Moreno_Pardo, Ara_Goodearl, Mercedes}). Thus, the Leavitt path algebra characterization is known for most of the properties listed in the diagram 
from ~\cite[p. 262]{Lam} shown below. 
\begin{equation}\label{diagram}\tag{D}
\xymatrix{
&& (\, \text{Baer} \, ) \ar@2[d] \\
\Big( \, \txt{von Neumann\\regular} \, \Big   )
\ar@2[r] &   \Big(\,  \txt{right \\semihereditary}  \, \Big )
\ar@2[r] &  \Big(\,  \txt{right \\Rickart} \, \Big )
\ar@2[r] &  \Big(\,  \txt{right \\nonsingular} \, \Big )
} 
\end{equation}
However, the property of being Baer has not yet been characterized for Leavitt path algebras. In this paper, we fill this gap by producing a graph-theoretic property which characterizes when a Leavitt path algebra is Baer (Theorem \ref{Baer_characterization}).

Recall that a ring $A$ is Baer if the right (equivalently left) annihilator of any subset of $A$ is generated by an idempotent and that $A$ is right Rickart if the right annihilator of any element is generated by an idempotent. A left Rickart ring is defined analogously. These properties emerged from the consideration of  operator algebras by Kaplansky and Rickart. Kaplansky and Berberian considered these properties for general rings in order to find ``algebraic avenues'' (as Berberian puts it in \cite{Berberian}) into operator theory, i.e. to study algebraic counterparts of operator algebras using algebraic methods alone.   
The treatment of Leavitt path algebras as the algebraic counterparts of graph $C^*$-algebras, fits right into this trend. In this respect, the characterization of Baer Leavitt path algebras seems to be a particularly natural progression. 

Most algebras of operators as well as Leavitt path algebras are involutive. In the presence of an involution, the projections, self-adjoint idempotents, are ``vastly easier to work with than idempotents'' (\cite{Berberian}). If the word ``idempotent'' is replaced by the word ``projection'' in the definitions of Baer and Rickart rings, one obtains Baer $*$-rings and Rickart $*$-rings respectively. We also produce a graph-theoretic property which determines when a Leavitt path algebra is a Baer $*$-ring (Theorem \ref{Baer_star_characterization}). 

The properties of an involution of the field $K$ directly impact the involution-related properties of $L_K(E)$. 
As a consequence, when characterizing properties of Leavitt path algebras as involutive rings, the underlying field, not just the graph, also becomes relevant (see \cite[Theorem 3.3]{Gonzalo_Ranga_Lia}, for example). This contrasts characterization results for graph $C^*$-algebras for which the underlying field is fixed. Our Baer $*$-ring characterization (Theorem~\ref{Baer_star_characterization}) holds under the assumption that the involution on $K$ is positive definite and we show that this assumption is necessary. 

In many cases, a single property of the graph $E$ characterizes an algebraic property of both $L_K(E)$ and the graph $C^*$-algebra $C^*(E).$ 
There are some exceptions (for example in results characterizing being von Neumann regular or being prime) but generally such exceptions occur less often. Our results provide more examples of such exceptions. In particular, 
\begin{center}
\begin{tabular}{l}
{\em $L_K(E)$ is Baer if and only if  $E$ is finite and no cycle has an exit}\hskip.4cm while \\
{\em $C^*(E)$ is Baer if and only if  $E$ is finite and acyclic (has no cycles).}
\end{tabular}
\end{center}
Also, if $K$ is positive definite, 
\begin{center}
\begin{tabular}{ll}
{\em $L_K(E)$ is a Baer $*$-ring if and only if} & {\em $E$ is a finite disjoint union of}\\
& {\em graphs which are finite and acyclic or loops} while \\
{\em $C^*(E)$ is a Baer $*$-ring if and only if} & {\em  $C^*(E)$ is Baer\hskip.4cm  if and only if\hskip.4cm  $E$ is finite and acyclic.}
\end{tabular}
\end{center}
Also, a graph $C^*$-algebra is Rickart if and only if it is a Rickart $*$-ring and we show that there is a Rickart Leavitt path algebra which is not a Rickart $*$-ring. 

The graph-theoretic properties which appear in our main results (Proposition~\ref{Rickart_characterization} and Theorems \ref{Baer_characterization} and \ref{Baer_star_characterization}) make it straightforward to construct algebras with various properties such as the following: Rickart and not Baer, Baer and not Baer $*$, Rickart $*$ and not Baer nor regular, and Baer and not a Rickart $*$-ring. These examples illustrate the use of the characterization results more generally than just for Leavitt path algebras.   

A Rickart ring is necessarily unital, restricting the characterization of Rickart, Baer and Baer $*$-rings to Leavitt path algebras over graphs with finitely many vertices. However, while not always unital, Leavitt path algebras always have local units. So, one can ``localize'' ring-theoretic properties by considering those properties on corners generated by the local units as it has been done with noetherian and artinian properties in \cite{AAPM}, with unit-regular rings in \cite{Gonzalo_Ranga_Lia} and with directly finite rings in \cite{Lia_Traces}. This motivates our definitions: we say that a ring is {\em locally} Rickart if each corner is Rickart and we ``localize'' the other annihilator-related properties similarly. We formulate and prove our main results (Proposition~\ref{Rickart_characterization} and Theorems \ref{Baer_characterization} and \ref{Baer_star_characterization}) to provide characterizations of these more general properties without any restriction on the cardinality of the underlying graph.

Leavitt path algebras are also naturally graded by the group of integers. Many ring-theoretic properties have been adapted to graded rings (for example graded regular in \cite{Hazrat_regular} or graded directly finite in \cite{Roozbeh_Lia}). We adapt the properties from diagram ~(\ref{diagram}) to graded rings and show that the same relations from diagram (\ref{diagram}) continue to hold (Proposition \ref{graded_properties}). Our main results (Proposition~\ref{Rickart_characterization} and Theorems \ref{Baer_characterization} and \ref{Baer_star_characterization}) characterize the graded versions of the annihilator-related properties of Leavitt path algebras as well. It is interesting to point out the following. 
\begin{center}
\begin{tabular}{l}
{\em $L_K(E)$ is graded Baer if and only if $L_K(E)$ is Baer}\hskip.9cm while \\
{\em $L_K(E)$ can be a graded Baer $*$-ring without being a Baer $*$-ring.}
\end{tabular}
\end{center}
If the word ``Baer'' is replaced with the word ``Rickart'' in the sentence above, the claim continues to hold. In addition, while Leavitt path algebras satisfy Handelman's Conjecture (stating that a $*$-regular ring is necessarily directly finite and unit-regular) as shown in \cite{Gonzalo_Ranga_Lia}, we show that the graded version of Handelman's Conjecture fails for Leavitt path algebras by producing an algebra which is graded $*$-regular and neither graded unit-regular nor graded directly finite. 
 
The paper is organized as follows. In \S\ref{section_graded_and_involutive}, we formulate graded versions of the annihilator-related properties, prove some basic properties of those (including the graded version of diagram (\ref{diagram})) and generalize the properties of the annihilators from unital to locally unital rings. In \S\ref{section_positive_definite}, we focus on some properties of the involution of a Leavitt path algebra and provide an alternative proof of \cite[Proposition 2.4]{Gonzalo_Ranga_Lia} stating that a Leavitt path algebra is positive definite if and only if the underlying field is positive definite (Proposition \ref{positive_definite}). In \S\ref{section_baer}, we characterize Rickart, Baer, Baer $*$, locally Rickart, locally Baer, locally Baer $*$, and graded Rickart, graded Rickart $*$, graded Baer and graded Baer $*$ Leavitt path algebras (Proposition~\ref{Rickart_characterization} and Theorems \ref{Baer_characterization} and \ref{Baer_star_characterization}) and present some applications of our results and various examples in Remarks \ref{seven_remarks}. We conclude the paper by considering Leavitt path algebras which are Rickart $*$-rings and posing a question in \S\ref{section_questions}.

\section{Annihilator-related conditions for graded and locally unital rings}\label{section_graded_and_involutive}

\subsection{Rickart and Baer rings} For a subset $X$ of a ring $A$ (not necessarily unital), the right annihilator $\ann_r(X)$ of $X$ in $A$ denotes the set of elements $a\in A$ such that $xa=0$ for all $x\in X.$ The left annihilator $\ann_l(X)$ is defined analogously. It is straightforward to check that $\ann_r(X)$ is a right and $\ann_l(X)$ is a left ideal of $A$. If $B$ is a subring of $A$ and $X\subseteq B$, we use $\ann^B_r(X)$ to denote the set $\ann_r(X)\cap B$ of elements that annihilate $X$ from the right in $B.$ The analogous notation $\ann^B_l(X)$ is used for the left annihilator of $X$ in $B$. 

Recall that $A$ is {\em right Rickart} if $\ann_r(x)$ is generated by an idempotent as a right ideal for any $x\in A,$ {\em left Rickart} if the analogous condition holds for the left annihilators of elements, and {\em Rickart} if it is both left and right Rickart. If a ring is right Rickart, the idempotent which generates the right annihilator of zero is a left identity. Consequently, Rickart rings are necessarily unital. 

A ring $A$ is said to be an involutive ring or a $*$-ring, if it has an involution $*,$ an anti-automorphism of order two. A $*$-ring $A$ is left Rickart if and only if $A$ is right Rickart since  $(\ann_r(x))^*=\ann_l(x^*)$ for any $x\in A.$ Thus, a left or right Rickart ring with involution is unital.  

A unital ring $A$ is {\em Baer} if $\ann_r(X)$ (equivalently $\ann_l(X)$) is generated by an idempotent for any $X\subseteq A.$ This condition is left-right symmetric since $\ann_l(\ann_r(\ann_l(X)))=\ann_l(X).$ 
 
\subsection{Rickart * and Baer *-rings} If $A$ is a $*$-ring, the projections (self-adjoint idempotents) take over the role of idempotents. A $*$-ring $A$ is said to be a {\em Rickart $*$-ring} if $\ann_r(x)$ is generated by a projection for any $x\in A.$ This condition is left-right symmetric (since $(\ann_r(x))^*=\ann_l(x^*)$) and a Rickart $*$-ring is necessarily unital. The projection which generates the annihilator of an element is necessarily unique since $pA=qA$ for $p,q$ projections implies that $p=q.$ Note that idempotents do not necessarily have this property.  

A $*$-ring $A$ is a {\em Baer $*$-ring} if $\ann_r(X)$ (equivalently $\ann_l(X)$) is generated by a projection for any $X\subseteq A.$ Berberian's book \cite{Berberian} contains a detailed and comprehensive treatment of Baer $*$-rings. In \cite{Berberian_web}, one can find more details on Rickart, Baer and Rickart $*$-rings as well.

If $A$ is a $*$-ring, the matrix ring  $\M_n(A)$ also becomes an involutive ring with the $*$-transpose involution given by $(a_{ij})^*=(a_{ji}^*).$ 
If a $*$-ring $A$ is also a $K$-algebra for some commutative, unital $*$-ring $K,$ then $A$ is a {\em
$*$-algebra} if $(kx)^*=k^*x^*$ for $k\in K$ and $x\in A.$

An involution $*$ on $A$ is \emph{positive definite} if, for any $n$ and any $x_1, \dots,
x_n\in A$, $\sum_{i=1}^n x_ix_i^* = 0$ implies $x_i=0$ for each $i=1,\ldots,
n.$ If this condition holds for positive integers less than or equal to $n,$ the involution is {\em $n$-proper}. A $1$-proper involution is simply said to be proper.  
A $*$-ring with a positive definite ($n$-proper) involution is said to be a {\em positive definite ($n$-proper)} ring.
 
\subsection{Graded rings}
If $\Gamma$ is an abelian group, a ring $A$ is a \emph{$\Gamma$-graded ring} if $ A=\bigoplus_{ \gamma \in \Gamma} A_{\gamma}$ such that each $A_{\gamma}$ is
an additive subgroup of $A$ and $A_{\gamma}  A_{\delta} \subseteq
A_{\gamma + \delta}$ for all $\gamma, \delta \in \Gamma$. 
The elements of $A^h=\bigcup_{\gamma \in \Gamma} A_{\gamma}$ are the \emph{homogeneous elements} of $A.$ If $A$ is an algebra over a field $K$, then $A$ is  a \emph{graded algebra} if $A$ is a graded ring and 
$A_{\gamma}$ is a $K$-vector subspace for any $\gamma \in \Gamma$. 

A $\Gamma$-graded ring $A$ is \emph{trivially graded} if $A_0=A$ and $A_\gamma=0$ for $0\neq\gamma \in \Gamma$. Note that any ring can be trivially graded by any abelian group.

For a $\Gamma$-graded ring $A$ and $(\gamma_1,\dots,\gamma_n)$ in $\Gamma^n$, $\M_n(A)(\gamma_1,\dots,\gamma_n)$ denotes the $\Gamma$-graded ring $\M_n(A)$ with the $\delta$-component consisting of the matrices $(a_{ij})\in\M_n(A)$ such that $a_{ij}\in A_{\delta+\gamma_j-\gamma_i}$ for $i,j=1,\ldots, n$ (more details in~\cite[Section 1.3]{Hazrat_graded}). 

A \emph{graded right $A$-module} is a right $A$-module $M$ with a direct sum decomposition $M =\bigoplus_{\gamma\in\Gamma} M_\gamma$ where $M_\gamma$
is an additive subgroup of $M$ such that $M_\gamma A_\delta \subseteq M_{\gamma+\delta}$ for all $\gamma,\delta\in \Gamma$. In this case, for $\delta\in\Gamma,$ the
$\delta$-\emph{shifted} graded right $A$-module $M(\delta)$ is defined as $M(\delta) =\bigoplus_{\gamma \in \Gamma} M(\delta)_\gamma,$  where $M(\delta)_\gamma = M_{\delta+\gamma}.$ 
A right $A$-module homomorphism $f$ of graded right $A$-modules $M$ and $N$ is a {\em graded homomorphism} if $f(M_\gamma)\subseteq N_\gamma$
for any $\gamma\in \Gamma$. 

A \emph{graded right ideal} of $A$ is a right ideal $I$ such that $I =\bigoplus_{\gamma\in\Gamma} I\cap A_\gamma.$ A right ideal $I$ of $A$ is a graded right ideal if and only if $I$ is generated by homogeneous elements. This property implies that $\ann_r(X)$ is a graded right ideal of $A$ for any set $X$ of homogeneous elements of $A$. Graded left ideals and graded ideals are defined similarly. 

If $A$ is a $\Gamma$-graded ring, a graded free right $A$-module is defined as a graded right module which is a free right $A$-module with a
homogeneous basis (see \cite[Section 1.2.4]{Hazrat_graded}). A graded free left module is defined analogously.  
If $A$ is a $\Gamma$-graded ring and $\gamma_1, \ldots,\gamma_n\in\Gamma$, then $A(\gamma_1)\oplus \dots \oplus A(\gamma_n)$  
is a graded free right $A$-module. Conversely, any finitely generated graded free right $A$-module is of this form. 
A graded projective right $A$-module is a graded module which is graded isomorphic to a direct summand of a graded free right $A$-module (see \cite[Proposition 1.2.15]{Hazrat_graded} for an equivalent definition).  

\subsection{Graded Rickart and Baer rings}
We adapt the definitions of Rickart and Baer rings to graded rings and show some properties of graded Rickart and Baer rings.   

\begin{definition}\label{definition_graded_Rickart_and_Baer}
A $\Gamma$-graded ring $A$ is a \emph{graded right Rickart} ring 
if for any homogeneous element $x$, $\ann_r(x)$ is generated by a homogeneous idempotent. A graded left Rickart ring and a graded Rickart ring are defined analogously. A unital, graded ring $A$ is a \emph{graded Baer} ring if for any set $X$ of homogeneous elements, $\ann_r(X)$ (equivalently $\ann_l(X)$) is generated by a homogeneous idempotent.  
\end{definition}

Analogously to the non-graded case, one can show that the definition of a graded Baer ring is left-right symmetric  (see \cite[Proposition 7.46]{Lam}) and that a graded Rickart ring is unital (with an idempotent that generates $\ann_r(0)$, or equivalently $\ann_l(0),$ as the identity). Also note that Definition \ref{definition_graded_Rickart_and_Baer} reduces to the usual definitions if the graded ring is trivially graded.  

The next lemma shows that unital, graded right Rickart rings can be characterized by the properties which are analogous to those in the non-graded case.  
\begin{lemma}\label{graded_Rickart_lemma}
Let $A$ be a unital, $\Gamma$-graded ring. The following conditions are equivalent.
\begin{enumerate}
\item $A$ is a graded right Rickart ring.
 
\item Every graded principal right ideal of $A$ is projective.

\item Every graded principal right ideal of $A$ is graded projective.
\end{enumerate}
As a consequence, if $A$ is a graded ring which is right Rickart, then it is graded right Rickart. 
\end{lemma}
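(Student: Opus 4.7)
The plan is to prove the cycle (1)$\Rightarrow$(3)$\Rightarrow$(2)$\Rightarrow$(1) and then deduce the consequence from the equivalence (1)$\Leftrightarrow$(2). As a preliminary observation, any homogeneous idempotent $e$ lies in $A_0$: if $e\in A_\gamma$ and $e=e^2$, then $e\in A_{2\gamma}$, which forces $\gamma=0$.

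For (1)$\Rightarrow$(3), let $x\in A_\gamma$ and use (1) to write $\ann_r(x)=(1-e)A$ for a homogeneous idempotent $e\in A_0$. The left-multiplication map $\lambda_x\colon A(-\gamma)\to xA$, $a\mapsto xa$, is a graded right $A$-module homomorphism of degree zero with kernel $\ann_r(x)(-\gamma)=(1-e)A(-\gamma)$, and therefore induces a graded isomorphism $xA\cong eA(-\gamma)$. Since $eA$ is a graded direct summand of $A$, it is graded projective, and so is its shift $eA(-\gamma)$. The implication (3)$\Rightarrow$(2) is formal: a direct summand of a graded free module is, after forgetting the grading, a direct summand of a free module.

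The main step is (2)$\Rightarrow$(1). Given $x\in A_\gamma$, the graded principal right ideal $xA$ is projective by (2), so the short exact sequence $0\to\ann_r(x)\to A\to xA\to 0$ splits in right $A$-modules, yielding an idempotent $f$ with $\ann_r(x)=fA$. The obstacle is to replace $f$ by a homogeneous idempotent. Since $x$ is homogeneous, $\ann_r(x)$ is a graded right ideal, so each homogeneous component $f_\delta$ of $f=\sum_\delta f_\delta$ belongs to $fA$, giving $ff_\delta=f_\delta$. Expanding and comparing homogeneous components in $\sum_\varepsilon f_\varepsilon f_\delta=f_\delta$ yields $f_0f_\delta=f_\delta$ for every $\delta$ (and $f_\varepsilon f_\delta=0$ for $\varepsilon\neq 0$). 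Taking $\delta=0$ shows that $f_0$ is idempotent, and $f_0f_\delta=f_\delta$ for all $\delta$ gives $f=\sum_\delta f_\delta\in f_0A$, hence $f_0A=fA=\ann_r(x)$. Thus $\ann_r(x)$ is generated by the homogeneous idempotent $f_0$.

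The consequence is immediate from (1)$\Leftrightarrow$(2): if $A$ is right Rickart, then every principal right ideal is projective, in particular every graded principal right ideal is projective, so $A$ is graded right Rickart. The only genuinely nontrivial step is the homogeneous-idempotent extraction in (2)$\Rightarrow$(1); the remaining implications are routine passages between graded and ungraded projectivity.
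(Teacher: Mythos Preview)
Your proof is correct, and your cycle $(1)\Rightarrow(3)\Rightarrow(2)\Rightarrow(1)$ differs from the paper's cycle $(1)\Rightarrow(2)\Rightarrow(3)\Rightarrow(1)$ in a meaningful way at one point. The paper passes from (2) to (3) by invoking an external result (\cite[Proposition~1.2.15]{Hazrat_graded}), which says a graded module that is projective is automatically graded projective, and then from (3) to (1) by using that the splitting map is a \emph{graded} homomorphism, so $\phi(1)\in A_0$. You instead go directly from (2) to (1): you take the possibly non-homogeneous idempotent $f$ produced by an ungraded splitting, and then extract its zero-component $f_0$ by the computation $ff_\delta=f_\delta$, comparing degrees to get $f_0f_\delta=f_\delta$ and hence $f_0A=fA$. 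This is a genuinely more elementary argument, since it avoids the cited result on graded projectivity and works entirely inside $A$; what the paper's route buys is a conceptual explanation (graded projectivity gives a graded splitting), whereas yours gives a hands-on idempotent manipulation that is self-contained. Both are short and clean.

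One minor remark: your preliminary observation that a nonzero homogeneous idempotent lies in $A_0$ is correct for arbitrary abelian $\Gamma$ (from $\gamma=2\gamma$), but you do not actually need it for your $(2)\Rightarrow(1)$ step, only for $(1)\Rightarrow(3)$, and there the paper simply takes it as part of the hypothesis that the idempotent is homogeneous.
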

\begin{proof}
(1) $\Rightarrow$ (2). Let $x\in A_\gamma$ for some $\gamma\in \Gamma$. Then $xA$ is a graded right ideal of $A$ with the grading given by 
$(xA)_\delta=xA_{\delta-\gamma}$ and the multiplication $L_x$ by $x$ on the left, is a graded homomorphism $A\to A$ with the image $xA(\gamma).$ Since $A$ is graded right Rickart, $\ann_r(x)=eA$, for some idempotent $e\in A_0$. So, the left multiplication $L_e: A\rightarrow eA$ given by $a\mapsto ea$ is a graded map 
and the short exact sequence
\[0\longrightarrow \ann_r(x) \stackrel{i}{\hookrightarrow}  A \stackrel{L_x}\longrightarrow xA(\gamma) \longrightarrow 0\] of graded right $A$-modules splits as
$L_ei$ is the identity map on $\ann_r(x).$ Thus, $xA(\gamma)$ is graded projective. A shift of a graded projective module is also graded projective, so $xA$ is graded projective and, hence, projective. 

(2) $\Rightarrow$ (3). If a graded right ideal is projective then it is graded projective by \cite[Proposition~1.2.15]{Hazrat_graded}. 

(3) $\Rightarrow$ (1). If $x\in A^h$ and $xA$ is graded projective, then $xA(\gamma)$ is graded projective also. Thus, there is a graded homomorphism $\phi: A\to \ann_r(x)$ such that $\phi(y)=y$ for all $y\in \ann_r(x).$ Since $A$ is unital, $\phi(1)=e$ is an idempotent such that $\ann_r(x)=eA$. The idempotent $e$ is homogeneous as $e=\phi(1)\in \phi(A_0)\subseteq A_0.$

The last sentence of the lemma holds since, if $A$ is right Rickart, then any principal right ideal is projective and so condition (2) holds.  
\end{proof}

A graded ring $A$ is graded Baer if and only if it is graded Rickart and the lattice of  principal right ideals generated by homogeneous idempotents is complete. This statement and its proof are completely analogous to the statement and the proof of \cite[Proposition 1.21]{Berberian_web}. Using the graded-version of this statement, one can show that a  graded ring which is Baer is graded Baer. The converse does not hold as the following example illustrates. The group ring $\Z[\Gamma]$ is not Baer for any finite group $\Gamma$ (\cite[Theorem 2.4]{Yi_Zhou}). If $\Gamma$ is any finite abelian group, it is direct to show that $\Z[\Gamma]$ is graded Baer for the grading with $\Z[\Gamma]_\gamma=\{m\gamma\mid m\in \Z\}$ for $\gamma\in \Gamma$. 
 
We point out another property of graded right (left) Rickart and Baer rings. If $A$ is graded right Rickart, then $A_0$ is right Rickart. Indeed, if $x\in A_0$ and $\ann_r(x)=eA$ for an idempotent $e\in A_0,$ we claim that $\ann_r^{A_0}(x)=eA_0.$ If $xr=0$ for $r\in A_0,$ then $r=es$ for some $s\in A.$ But since $r,e\in A_0$, all the homogeneous components of $s$ from $A_\gamma$ for $\gamma\neq 0$ have to be trivial. Hence $s\in A_0$ and so $r\in eA_0.$ The converse is straightforward. Similarly, if  $A$ is graded left Rickart, then $A_0$ is left Rickart and if  $A$ is graded Baer, then $A_0$ is Baer.

We now establish the implications of diagram (\ref{diagram}) for graded rings. Let us introduce the graded versions of the remaining properties of the diagram. 

A graded ring is \emph{graded right (semi)hereditary} if any (finitely generated) graded right ideal is graded projective. 
A graded ring is \emph{graded von Neumann regular} if for any homogeneous element $x$, there is a homogeneous element $y$ such that $xyx=x$.  
A graded ring is \emph{graded right nonsingular} if for any nonzero, homogeneous element $x$, there is a nontrivial, graded right ideal $I$ such that $\ann_r(x) \cap I =0$. 

A unital, graded right semihereditary ring is graded right Rickart by Lemma \ref{graded_Rickart_lemma}. A graded Baer ring is graded Rickart by definition. We prove the remaining implications of diagram (\ref{diagram}) for unital, graded rings. 

\begin{proposition}\label{graded_properties}
Let $A$ be a unital, $\Gamma$-graded ring. 
\begin{enumerate}[\upshape(1)]

\item If $A$ is graded von Neumann regular then $A$ is graded semihereditary.

\item The ring $A$ is graded right semihereditary if and only if 
$\M_n(A)(\gamma_1,\dots,\gamma_n)$ is graded right Rickart, for any $n=1,2,\ldots$ and any $\gamma_1,\dots,\gamma_n \in \Gamma$.  

\item If $A$ is graded right Rickart, then $A$ is right nonsingular. 
\end{enumerate}
\end{proposition}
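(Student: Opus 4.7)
The plan is to tackle the three parts in order, with Lemma~\ref{graded_Rickart_lemma} bridging idempotent generation of annihilators and graded projectivity of principal ideals. For (1), I will show that every finitely generated graded right ideal of a graded von Neumann regular ring is generated by a homogeneous idempotent and hence is a graded direct summand of $A$. For a single homogeneous generator $x$, graded regularity supplies a homogeneous $y$ with $xyx = x$, and $xy$ is then a homogeneous idempotent of degree $0$ with $xA = xyA$. For $I = eA + zA$ with $e \in A_0$ a homogeneous idempotent and $z$ homogeneous, I rewrite $I = eA + (1-e)zA$, apply the base case to $(1-e)z$ to obtain a homogeneous idempotent $f$ with $fA = (1-e)zA$, note that $ef = 0$, and verify that $e + (f - fe)$ is a homogeneous idempotent generating $I$; an induction on the number of generators then finishes.

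For (2), the forward direction uses that graded right semihereditariness is a graded Morita invariant, so if $A$ is graded right semihereditary then so is $B = \M_n(A)(\gamma_1, \dots, \gamma_n)$, and Lemma~\ref{graded_Rickart_lemma} gives that $B$ is graded right Rickart. For the converse, take a finitely generated graded right ideal $I = x_1 A + \cdots + x_n A$ with homogeneous $x_i$ of degree $\delta_i$, and choose shifts $\gamma_1, \dots, \gamma_n$ so that the matrix $X$ with first row $(x_1, \dots, x_n)$ and zeros elsewhere is homogeneous in $B$ (e.g., $\gamma_1 = 0$ and $\gamma_j = \delta_j - \delta_1$ makes $X$ homogeneous of degree $\delta_1$). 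By hypothesis and Lemma~\ref{graded_Rickart_lemma}, $XB$ is graded projective as a right $B$-module; the graded Morita equivalence $M \mapsto M e_{11}$ sends $XB$ to $XB e_{11} \cong I$ as graded right $A$-modules, so $I$ is graded projective. The main obstacle here is the shift bookkeeping: the $\gamma_j$ must be chosen so that $X$ lands in a single homogeneous component of $B$ and so that the Morita image of $XB$ is identified with $I$ itself (without a residual shift).

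For (3), I prove the stronger assertion that $A$ is right nonsingular in the ordinary (non-graded) sense. Take a nonzero $x \in A$ and write $x = x_{d_1} + \cdots + x_{d_k}$ in homogeneous components with $d_1 < \cdots < d_k$ and $x_{d_k} \neq 0$. Let $\ann_r(x_{d_k}) = eA$ for a homogeneous idempotent $e$; since $x_{d_k} \neq 0$, we have $e \neq 1$. I claim $(1-e)A \cap \ann_r(x) = 0$, from which it follows that $\ann_r(x)$ is not essential and hence $A$ is right nonsingular. If $0 \neq a \in (1-e)A$ satisfies $xa = 0$ and $a_m$ is its top-degree homogeneous component, then $d_k + m$ is the unique maximal total degree appearing in $xa$, so the summand $x_{d_k} a_m$ must vanish; hence $a_m \in eA$ and $a_m = e a_m$. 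But $a \in (1-e)A$ forces $a_m = (1-e) a_m$, and together these give $a_m = 0$, contradicting the choice of $a_m$ as a nonzero homogeneous component.
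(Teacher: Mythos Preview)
Your arguments for parts (1) and (2) are correct and essentially match the paper's: the same idempotent-building induction in (1), and the same Morita/matrix-ring reduction in (2), with only cosmetic differences in how the projectivity of $XB$ is transferred back to $I$ (you use the Morita functor $M\mapsto Me_{11}$; the paper views $xR$ as a graded $A$-module and identifies it with $I^n$).

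Part (3), however, has a genuine gap. Your ``top-degree'' argument writes $x=x_{d_1}+\cdots+x_{d_k}$ with $d_1<\cdots<d_k$ and isolates the component of $xa$ in degree $d_k+m$. This presupposes a total order on $\Gamma$ compatible with addition, so that sums of degrees have a unique maximum. The proposition is stated for an arbitrary abelian group $\Gamma$, and for groups with torsion (e.g.\ $\Gamma=\Z/n\Z$) no such order exists; in that setting several pairs $(d_i,c_j)$ can contribute to the same graded component of $xa$, and your isolation of $x_{d_k}a_m$ fails. So the argument as written does not go through in the generality required.

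Note also that the paper's own proof of (3) only treats homogeneous $x$: it takes $0\neq x\in A^h$, writes $\ann_r(x)=eA$ with $e\in A_0$, and observes that $(1-e)A$ is a nonzero graded right ideal meeting $\ann_r(x)$ trivially. This establishes \emph{graded} right nonsingularity rather than the ungraded version you set out to prove. If you restrict your argument to homogeneous $x$, it collapses to exactly this; if you want the stronger ungraded conclusion for general $\Gamma$, you need a different mechanism for extracting a homogeneous piece of $a$ that must lie in $eA$.
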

The proof follows the proofs of analogous statements for the non-graded case, \cite[Theorem~1.1]{Goodearl_book} and \cite[Proposition~5.2]{Handelman} (or \cite[Proposition 7.63]{Lam}). We include the proof for completeness.
\begin{proof}
(1) If $A$ is graded regular, the right ideal generated by a homogeneous element $x\in A$ is generated by a homogeneous idempotent. Indeed, if $y\in A^h$ is such that $xyx=x,$ then $xy$ is a homogeneous idempotent and $xA=xyA$. Therefore, all principal graded right ideals are projective. We show that the same holds for graded right ideals generated by two elements. Consider $xA+yA$ for $x,y\in A^h.$ Let $e=e^2\in A_0$ be such that $xA=eA.$ Then $y-ey \in A^h\cap (xA+yA)$ and so $xA+yA=eA+(y-ey)A$. Let $f=f^2\in A_0$ be such that $(y-ey)A=fA$. This implies that $ef=0$, and so $g=f-fe$ is a homogeneous idempotent with $ge=eg=0$ and $eA+gA=(e+g)A$. Since $fg=g$ and $gf=f$, it follows that $gA=fA=(y-ey)A.$ Consequently, $xA+yA=eA+gA=(e+g)A$. The claim for any finitely generated graded right ideal follows by induction. The claim for graded left ideals is showed analogously.   

(2) Begin semihereditary is a Morita invariant property (\cite[Corollary 18.6]{Lam}). Using an analogous proof, one can show that being graded semihereditary is a graded Morita invariant property. Thus, if $A$ is graded right semihereditary, then $\M_n(A)(\gamma_1,\dots,\gamma_n)$ is graded right semihereditary and thus graded right Rickart. 

For the converse, suppose that $I$ is a graded right ideal generated by $x_1,\ldots, x_n$ with $x_i\in A_{\gamma_i}$, for $i=1,\ldots, n$. Let $R=\M_n(A)(\gamma_1,\dots,\gamma_n)$ and consider $x=x_1e_{11}+x_2e_{12}+\dots+x_ne_{1n}$ where $e_{ij}$ are the standard matrix units. By definition, $x$ is in $R_{\gamma_1}$ (\cite[\S 1.3.1]{Hazrat_graded} has more details). The graded right ideal $xR=Ie_{11}+Ie_{12}+\dots+Ie_{1n}$ is a graded projective right $R$-module by Lemma~\ref{graded_Rickart_lemma}. On the other hand, since $R$ is a graded free right $A$-module, $xR$ is a graded projective right $A$-module. Since $xR$ is graded isomorphic to $I^n,$ $I$ is a graded summand of a free right $A$-module and so $I$ is a graded projective right $A$-module.

(3) Let $0\neq x\in A^h$. Then $\ann_r(x)=eA$ for an idempotent $e\in A_0$ and the graded right ideal $(1-e)A$ is such that $\ann_r(x) \cap (1-e)A=0$. Since $x\neq 0$ and $A$ is unital, $eA\subsetneq A$ and so $(1-e)A\neq 0.$
\end{proof}

\subsection{Graded Rickart *-ring and graded Baer *-rings}\label{subsection_graded_star}
In \cite{Roozbeh_Lia}, a $\Gamma$-graded ring $A$ with involution is said to be a \emph{graded $*$-ring} if $A_\gamma ^*\subseteq A_{-\gamma}$ for every $\gamma\in \Gamma.$ In this case, considering projections in place of idempotents leads to the following definitions.  

\begin{definition}\label{definition_graded_Rickart_and_Baer_star}
A graded $*$-ring $A$ is a \emph{graded Rickart $*$-ring} if the right (equivalently left) annihilator of any homogeneous element is generated by a homogeneous projection. 
A graded $*$-ring $A$ is a \emph{graded Baer $*$-ring} if the right (equivalently left) annihilator of any set of homogeneous elements is generated by a homogeneous projection. 
\end{definition}

We point out some properties which follow from these definitions. First, all concepts in Definition \ref{definition_graded_Rickart_and_Baer_star} are left-right symmetric. Second, the projections from the definitions above are necessarily unique. 
Third, if $A$ is a graded $*$-ring which is Rickart $*$, then it is a graded Rickart $*$-ring. Similarly, if $A$ is Baer $*$, then $A$ is a graded Baer $*$-ring. 
These two claims follow from the graded  analogues of \cite[Proposition 1.11]{Berberian_web} and \cite[Proposition 1.24]{Berberian_web}. These graded analogues can be shown directly following the proofs of  \cite[Proposition 1.11]{Berberian_web} and \cite[Proposition 1.24]{Berberian_web} and replacing ``idempotent'' with ``homogeneous idempotent'', ``projection'' with ``homogeneous projection'', ``ring'' with ``graded ring'' and doing similar adjustments.
Lastly, if   
$A$ is a graded Rickart $*$-ring, then $A_0$ is a Rickart $*$-ring and if $A$ is a graded Baer $*$-ring, then $A_0$ is a Baer $*$-ring. In remark (5) of \ref{seven_remarks} and \S \ref{section_questions}, we present examples of Baer $*$ but not graded Baer $*$ and Rickart $*$ but not graded Rickart $*$-rings.  

The involution of a Rickart $*$-ring is necessarily proper. We show that an analogous statement holds for graded rings. We say that a graded $*$-ring is \emph{graded proper}, if $xx^*=0$ implies $x=0$ for any homogeneous element $x$. If $A$ is a graded Rickart $*$-ring, $x\in A_\gamma$ and $xx^*=0,$ then $x^*\in\ann_r(x)=pA$ for some homogeneous projection $p.$ Thus $x^*=px^*.$ ``Starring'' this relation implies that $x=(px^*)^*=xp=0.$ 

We note that if $A$ is graded and proper, then $A$ is graded proper and if $A$ is graded proper, then the zero-component $A_0$ is proper. 

The definitions of positive definite and $n$-proper rings can be generalized to graded $*$-rings also by requiring that elements considered in the definitions are homogeneous. A graded $*$-ring $A$ is graded $n$-proper if and only if $\M_n(A)(\gamma_1,\dots,\gamma_n)$ is graded proper for any $n$ and any $\gamma_1,\dots,\gamma_n\in \Gamma$ as can be shown by a proof analogous to the corresponding non-graded statement (see \cite[Lemma 2.1]{Gonzalo_Ranga_Lia}).  Note that the \emph{$*$-transpose} $(a_{ij})^*=(a_{ji}^*)$, for $(a_{ij}) \in \M_n(A)(\gamma_1,\dots,\gamma_n)$, gives the structure of a graded $*$-ring to $\M_n(A)(\gamma_1,\dots,\gamma_n)$ if $A$ is a graded $*$-ring. 

We shall also use the following graded version of \cite[Proposition 1.13]{Berberian_web}.  
\begin{proposition}\label{graded_star_regular}
Let $A$ be a unital, $\Gamma$-graded $*$-ring. The following conditions are equivalent.

\begin{enumerate}
\item $A$ is graded regular and a graded Rickart $*$-ring.

\item $A$ is graded regular and graded proper.

\item For any homogeneous $x\in A,$ there is a homogeneous projection $p$ such that $xA=pA.$
\end{enumerate} 
\end{proposition}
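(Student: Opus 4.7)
The plan is to prove the three implications cyclically: $(1)\Rightarrow(2)$, $(2)\Rightarrow(3)$, $(3)\Rightarrow(1)$.

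The direction $(1)\Rightarrow(2)$ is immediate: the paragraph preceding the proposition establishes that a graded Rickart $*$-ring is graded proper, and combined with the graded regularity assumed in (1), this is (2). For $(3)\Rightarrow(1)$, given $x\in A^h$ with $xA = pA$ for a homogeneous projection $p$, I will write $p = xw$ and $x = py$ for homogeneous $w,y$ (their degrees being forced by $\deg x$ and $\deg p = 0$). Then $px = p^{2}y = py = x$, so $xwx = (xw)x = px = x$, yielding graded regularity. Also $\ann_r(x) = \ann_r(p) = (1-p)A$ with $1-p$ a homogeneous projection, so $A$ is a graded Rickart $*$-ring.

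The substantive direction is $(2)\Rightarrow(3)$, and I expect the main obstacle to lie here. The plan has two parts. First, fix $x\in A^h$ of degree $\gamma$ and set $v := xx^* \in A_0$ (self-adjoint of degree $0$). Applying graded regularity to $v$ produces $b \in A^h$ with $vbv = v$; degree-counting forces $b \in A_0$. A direct expansion, using only $v^* = v$, gives
\[
(x - vbx)(x - vbx)^{*} \;=\; v - vb^{*}v - vbv + vbvb^{*}v \;=\; v - vb^{*}v - v + vb^{*}v \;=\; 0,
\]
where the penultimate equality applies $vbv = v$ once to the third summand and once inside the fourth. Graded properness applied to the homogeneous element $x - vbx$ then forces $x = vbx$, and hence $xA = vA = xx^{*}A$.

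Second, I will invoke the non-graded \cite[Proposition~1.13]{Berberian_web} for $A_0$. The zero-component $A_0$ is $*$-regular: it is regular (a homogeneous pseudo-inverse of an element of $A_0$ must lie in $A_0$ by degree) and proper (as already noted in the text, graded proper implies $A_0$ proper). Hence the principal right ideal $vA_0$ of $A_0$ is generated by a projection $p \in A_0$, which is automatically homogeneous of degree $0$ in $A$. From $v \in pA_0 \subseteq pA$ and $p \in vA_0 \subseteq vA$ we conclude $vA = pA$, and combined with the first part, $xA = pA$. This establishes (3). The key insight that makes the whole argument work is the cancellation in the displayed computation, which needs only the single pseudo-inverse relation $vbv = v$ and does not require $b$ to be self-adjoint or reflexive; the reduction to the non-graded case via the self-adjoint element $xx^{*}\in A_0$ is what carries the rest of the argument.
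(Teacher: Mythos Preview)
Your argument for $(2)\Rightarrow(3)$ is correct and is essentially the classical Berberian computation made explicit, followed by a clean reduction to the $*$-regular ring $A_0$. Your $(3)\Rightarrow(1)$, however, contains a genuine error: from $xA=pA$ it does \emph{not} follow that $\ann_r(x)=\ann_r(p)$. Take $A=\M_2(K)$ (trivially graded), $x=e_{12}$, and $p=e_{11}$; then $xA=e_{11}A=pA$, but $\ann_r(x)=e_{11}A$ whereas $\ann_r(p)=e_{22}A$. What your relations actually give is $\ann_r(x)=(1-wx)A$, since $xwx=x$ forces $x(1-wx)=0$ and $xa=0$ gives $a=(1-wx)a$; but $wx$ is only a homogeneous idempotent, not a projection, so this yields graded Rickart rather than graded Rickart~$*$.

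The fix is short: apply (3) to $x^*$ instead of $x$. If $x^*A=qA$ with $q$ a homogeneous projection, choose a homogeneous $u$ with $q=x^*u$; then $q=q^*=u^*x$. Also $x^*\in qA$ gives $qx^*=x^*$, hence $x=xq$. Now $xa=0$ implies $qa=u^*xa=0$, and conversely $qa=0$ gives $xa=xqa=0$. Thus $\ann_r(x)=(1-q)A$ with $1-q$ a homogeneous projection, establishing graded Rickart~$*$. With this correction your cycle closes; the paper itself simply refers to Berberian's non-graded proof with ``homogeneous'' inserted throughout, so your explicit route (once repaired) is close in spirit to what is intended.
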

We showed that (1) implies (2). The remaining implications follow from the proof of \cite[Proposition 1.13]{Berberian_web} when ``idempotent'' is replaced with ``homogeneous idempotent'', ``projection'' with ``homogeneous projection'', ``regular'' with ``graded regular'', ``proper'' with ``graded proper'' and doing similar adjustments.

If a unital, graded $*$-ring satisfies the equivalent conditions of Proposition \ref{graded_star_regular}, we say that it is \emph{graded $*$-regular}.

\subsection{Locally Rickart and Baer rings}\label{subsection_local}
As we pointed out, a Rickart ring is necessarily unital. The main object of our interest, a Leavitt path algebra, is not necessarily unital. Thus, we generalize the annihilator-related properties which we consider to non-unital rings.   
 
The properties of being right (left) Rickart and Baer are passed to corners (\cite[Propositions 2.2, 2.3]{Berberian_web}) since if $X\subseteq eAe$ for $e=e^2\in A,$ and $\ann_r(X)=fA$ for some idempotent $f$, then $\ann_r^{eAe}(X)=ef eAe.$ The same argument can be used to show that the properties of being graded right (left) Rickart and graded Baer are passed to corners generated by homogeneous idempotents. Analogously, being (graded) Rickart $*$ and (graded) Baer $*$ are passed to corners generated  by (homogeneous) projections. These facts can be used to generalize the annihilator-related properties from unital to non-unital rings as follows. 

\begin{definition}
Let $A$ be a ring, possibly non-unital. The ring $A$ is {\em locally right Rickart} if $eAe$ is right Rickart for any idempotent $e.$ We analogously define locally left Rickart, locally Rickart, locally Baer, locally Rickart $*$, and locally Baer $*$, as well as the graded versions of all these concepts.  
\end{definition}

The local concepts we introduced are particularly meaningful for locally unital rings. Recall that a ring $A$ is {\em locally unital} 
if for each finite set $F$ of elements of $A$, there is an idempotent $u$ such that $ux=xu=x$ for all $x\in F.$ The set of all such idempotents $u$ is said to be a set of local units. If $A$ is also a $*$-ring and the local units are projections, then we say that $A$ is {\em locally $*$-unital.} 

If $A$ is $\Gamma$-graded, $A$ is {\em graded locally unital} if for each finite set $F$ of (homogeneous) elements of $A$, there is a homogeneous idempotent $u$ such that $ux=xu=x$ for all $x\in F.$  One can check that the statements with and without the word ``homogeneous'' in parentheses in the previous sentence are equivalent. If $A$ is also a $*$-ring and the graded local units are projections, we say that $A$ is {\em graded locally $*$-unital.} 

If $A$ is a locally unital ring with the set of local units $U,$ then for $A$ to be locally right Rickart, locally left Rickart or locally Baer, it is sufficient that $uAu$ is right Rickart, left Rickart or Baer respectively for all $u\in U.$ This is because for any idempotent $e,$ there is a local unit $u\in U$ such that $e\in uAu$ and so $eAe=euAue$ is a corner of $uAu.$ Since any of the annihilator-related properties which we consider transfer to corners, $eAe$ keeps the property which $uAu$ has. Analogous statements can be made for graded rings, for $*$-rings, and for graded $*$-rings.

In addition, the local definitions coincide with the usual ones for unital rings. For example, if $A$ is a unital ring, then $A$ is locally right Rickart if and only if $A$ is right Rickart. One direction follows by considering the corner generated by the identity and the other since the property of being right Rickart passes to corners. Analogous statements can be made for graded rings, for $*$-rings and for graded $*$-rings. 

We shall use the graded and involutive version of \cite[Theorem 7.55]{Lam} stating that a ring with no infinite set of nonzero, orthogonal idempotents is left (right) Rickart if and only if it is Baer. The proof of the graded version of this statement is analogous to the proof of  \cite[Theorem 7.55]{Lam} and we omit it. The involutive version, in which the idempotents are replaced by projections, Baer with Baer $*$ and Rickart with Rickart $*$, holds by an analogous proof. We formulate the graded and graded local versions of this statement and note that those can be shown using analogous proofs. 

\begin{proposition}\label{Rickart_star=Baer_star}
Let $A$ be a $\Gamma$-graded $*$-ring. If $A$ has no infinite set of nonzero, orthogonal, homogeneous projections, then $A$ is a graded Rickart $*$-ring if and only if $A$ is a graded Baer $*$-ring.

If no corner of $A$ generated by a homogeneous projection has an infinite set of nonzero, orthogonal, homogeneous projections, then $A$ is a graded locally Rickart $*$-ring if and only if $A$ is a graded locally Baer $*$-ring.
\end{proposition}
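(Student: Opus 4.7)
The plan is to follow the Zorn-plus-maximality argument used in the non-graded case (\cite[Theorem 7.55]{Lam}) and its involutive variant, adapted so that all elements, idempotents, and projections in sight are homogeneous. The forward direction (graded Baer $*$ implies graded Rickart $*$) is immediate. For the converse, I assume $A$ is graded Rickart $*$ with no infinite set of nonzero, orthogonal, homogeneous projections. Two structural observations streamline the argument: $A$ is unital with $1\in A_0$ by the discussion in \S\ref{subsection_graded_star}, and any nonzero homogeneous projection sits in $A_0$, since $p\in A_\gamma$ and $p=p^2\in A_{2\gamma}$ force $\gamma=0$. So every projection I manipulate lies in the zero component.

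Given a set $X$ of homogeneous elements, I apply Zorn to the collection of pairwise orthogonal, nonzero, homogeneous projections contained in $\ann_r(X)$, ordered by inclusion with chains bounded by their unions. The resulting maximal family is finite by the standing hypothesis, say $\{p_1,\dots,p_n\}$, and I set $p=p_1+\cdots+p_n$. By orthogonality and self-adjointness $p$ is a homogeneous projection in $A_0\cap\ann_r(X)$, so $pA\subseteq\ann_r(X)$. Since $\ann_r(X)$ is a graded right ideal, the reverse inclusion reduces to showing $y=py$ for every homogeneous $y\in\ann_r(X)$.

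Suppose toward contradiction that $z:=(1-p)y\neq 0$; then $z$ is a nonzero homogeneous element of $\ann_r(X)$, as $xz=xy-xpy=0$ for each $x\in X$. Applying graded Rickart $*$ to $z^*$ yields $\ann_r(z^*)=eA$ for a homogeneous projection $e\in A_0$, and setting $g=1-e$ gives $gz=z$ (from $ez=(z^*e)^*=0$). A standard computation will show that $g$ is the smallest projection $q$ with $qz=z$: any such $q$ forces $z^*(1-q)=0$, hence $1-q\in eA$, which gives $e(1-q)=1-q$ and so $gq=g$, and by $*$ also $qg=g$. Exploiting this minimality three times yields the contradiction: (i) $g\neq 0$, since $gz=z\neq 0$; (ii) $pz=p(1-p)y=0$ gives $(1-p)z=z$, so $g\leq 1-p$ and $g$ is orthogonal to each $p_i$; (iii) for each $x\in X$, writing $\ann_r(x)=e_xA$ with $e_x$ a homogeneous projection gives $e_xz=z$, hence $g\leq e_x$, and therefore $xg=xe_xg=0$, so $g\in\ann_r(X)$. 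Adjoining $g$ to $\{p_1,\dots,p_n\}$ contradicts maximality; hence $z=0$ and $\ann_r(X)=pA$.

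The locally graded version follows by applying the unital case inside each corner $eAe$ generated by a homogeneous projection $e$: such corners inherit the graded Rickart $*$ structure (see \S\ref{subsection_local}) and satisfy the finiteness hypothesis by assumption, so the corner-wise equivalence propagates to the locally graded equivalence. The main obstacle throughout is step (iii) above: ensuring that the extracted projection $g$ actually lies in $\ann_r(X)$. This is where the $*$-structure is genuinely used, via the minimality characterization of $g$, an argument that has no analogue for a merely Rickart (non-involutive) ring.
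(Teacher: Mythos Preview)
Your argument is correct and is precisely the graded, involutive adaptation of \cite[Theorem 7.55]{Lam} that the paper points to; the paper itself omits the proof, merely noting that it follows by the analogous argument, so you have supplied exactly what the paper leaves implicit. One minor remark: your closing comment that step~(iii) ``has no analogue for a merely Rickart (non-involutive) ring'' overstates things slightly, since \cite[Theorem 7.55]{Lam} does establish the idempotent version---the point is only that the clean minimality argument via $g\leq e_x$ is special to projections, not that the non-$*$ result fails.
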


In \S \ref{section_baer}, 
we use the following lemma, showing an implication of diagram (\ref{diagram}) for locally unital rings. 
 
\begin{lemma}\label{semihereditary}
If $A$ is locally unital and right semihereditary, then $A$ is locally right Rickart.  
\end{lemma}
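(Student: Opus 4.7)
The plan is to show that for an arbitrary idempotent $e\in A$, the corner $eAe$ is right Rickart, using the standard unital characterization: a unital ring is right Rickart if and only if every principal right ideal is a projective module, equivalently every such principal right ideal is a direct summand.

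Fix an idempotent $e\in A$ and $x\in eAe$. Since $A$ is locally unital, there is a local unit $v$ with $vx=xv=x$, whence $x=xv\in xA$, so $xA$ is principal (hence finitely generated) as a right ideal of $A$. By the hypothesis that $A$ is right semihereditary, $xA$ is projective as a right $A$-module. Using projectivity to lift the identity of $xA$ along the surjection $A\to xA,\ a\mapsto xa$, we obtain a right $A$-module section $s:xA\to A$ with $x\cdot s(y)=y$ for every $y\in xA$.

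The key step is to cut this splitting down to $eAe$. Define $s_e:x(eAe)\to eAe$ by $s_e(y)=e\,s(y)\,e$. This is well-defined since $x(eAe)\subseteq xA$ and $es(y)e\in eAe$. It is a section for the surjection $eAe\to x(eAe),\ a\mapsto xa$, because for $y\in x(eAe)\subseteq eAe$ we have $x\cdot s_e(y)=xe\,s(y)\,e=x\,s(y)\,e=ye=y$, using $xe=x$ and $ye=y$. Right $eAe$-linearity is verified by using that $a=ea=ae$ for every $a\in eAe$: for such an $a$,
\[
s_e(ya)=e\,s(ya)\,e=e\,s(y)\,a\,e=e\,s(y)\,a=e\,s(y)\,e\,a=s_e(y)\cdot a,
\]
where the second equality uses $A$-linearity of $s$ and the remaining equalities move $e$ across using $ae=a$ and $ea=a$.

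Thus the short exact sequence
\[
0\longrightarrow \ann_r^{eAe}(x)\longrightarrow eAe\longrightarrow x(eAe)\longrightarrow 0
\]
of right $eAe$-modules splits, so $\ann_r^{eAe}(x)$ is a direct summand of $eAe$ as a right $eAe$-module. Since $eAe$ is unital with identity $e$, the standard argument (decomposing $e=f+g$ along this direct sum and checking $f^2=f$, $g^2=g$, $fg=gf=0$) produces an idempotent $f\in eAe$ with $\ann_r^{eAe}(x)=f\cdot eAe$. Hence $eAe$ is right Rickart, and as $e$ was arbitrary, $A$ is locally right Rickart. The only mildly delicate point is the right $eAe$-linearity of $s_e$, for which the identities $ae=ea=a$ on $eAe$ are essential; everything else is routine transport along the corner.
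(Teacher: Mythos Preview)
Your proof is correct and follows essentially the same strategy as the paper: use projectivity of $xA$ to split the short exact sequence $0\to\ann_r(x)\to A\to xA\to 0$, then transport that splitting down to a corner. The only differences are cosmetic duals: the paper retracts onto the kernel (a map $\phi:A\to\ann_r(x)$) and cuts down to a corner $uAu$ for a local unit $u$ via $y\mapsto u\phi(y)$, whereas you section the image (a map $s:xA\to A$) and cut down to $eAe$ for an arbitrary idempotent via $y\mapsto e\,s(y)\,e$. Your version has the small advantage of handling arbitrary idempotents directly, without appealing to the reduction ``it suffices to check corners at local units'' stated earlier in \S\ref{subsection_local}; the paper's one-sided cut $u\phi(y)$ silently lands in $uAu$ only because $\phi$ is right $A$-linear and $yu=y$, a point your two-sided cut makes explicit.
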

\begin{proof}
Since $A$ is right semihereditary, $xA$ is projective for any $x\in A$ and so the short exact sequence  
$0\longrightarrow \ann_r(x)\longrightarrow A\longrightarrow xA\longrightarrow0$ 
splits. Let $\phi$ be a map $A\to \ann_r(x)$ such that $\phi(y)=y$ for all $y\in \ann_r(x).$ If $u$ is a local unit for $x,$ the map $\psi: uAu\to uAu$ defined by $\psi(y)=u\phi(y)$ is a homomorphism of right $uAu$-modules. The element $\psi(y)$ is in $\ann_r(x)$ for any $y\in uAu$ since $x\psi(y)=xu\phi(y)=x\phi(y)=0.$ So, $\psi(y)\in \ann_r(x)\cap uAu=\ann_r^{uAu}(x).$ If $y\in \ann_r^{uAu}(x),$ then $y\in \ann_r(x)$ and so $\phi(y)=y.$ Thus we have that $\psi(y)=u\phi(y)=uy=y$ and 
the short exact sequence 
\[0\longrightarrow \ann^{uAu}_r(x)\longrightarrow uAu\longrightarrow xuAu\longrightarrow0\]
splits. 
\end{proof} 

We adapt a part of Proposition \ref{graded_star_regular} to (graded) locally $*$-unital rings. We note the following lemma first.

\begin{lemma}\label{locally_rickart_star}
A locally (graded) Rickart $*$-ring with a set of (graded) local $*$-units is (graded) proper. 
\end{lemma}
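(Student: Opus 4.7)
The plan is to reduce properness of $A$ to properness of one of its corners $uAu$, where $u$ is a (graded) local $*$-unit for the element in question. Once we are inside $uAu$, the hypothesis that $A$ is locally (graded) Rickart $*$ kicks in and we can quote the already-established fact that (graded) Rickart $*$-rings have (graded) proper involution.

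More concretely, let $x \in A$ be arbitrary in the non-graded case, and a homogeneous element in the graded case, and suppose $xx^* = 0$. Since $A$ has a set of (graded) local $*$-units, there exists a (homogeneous) projection $u$ with $ux = xu = x$. Applying the involution to these identities shows $ux^* = x^*u = x^*$, so both $x$ and $x^*$ lie in the corner $uAu$, and the computation of $x^*$ inside $uAu$ coincides with its computation inside $A$. In particular, the equality $xx^* = 0$ holds in $uAu$ as well.

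By the assumption that $A$ is locally (graded) Rickart $*$, the corner $uAu$ (which is generated by a (homogeneous) projection) is itself a (graded) Rickart $*$-ring. The excerpt already records that the involution of a Rickart $*$-ring, and likewise of a graded Rickart $*$-ring, is (graded) proper: if $yy^* = 0$ with $y$ (homogeneous), then $y^* \in \ann_r(y) = pR$ for a (homogeneous) projection $p$ in the ring $R = uAu$, whence $y^* = py^*$ and applying $*$ yields $y = yp = 0$. Applying this to $y = x$ inside $R = uAu$ gives $x = 0$, which is the desired conclusion.

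No step looks genuinely difficult; the only care needed is to check that all the objects stay inside the appropriate category. In the graded setting this amounts to verifying that the local $*$-unit $u$ can be chosen homogeneous (which is guaranteed by the definition of graded locally $*$-unital recalled in \S\ref{subsection_local}), and that the corner $uAu$ is a graded $*$-subring so that the quoted graded properness of graded Rickart $*$-rings applies verbatim. Once these bookkeeping items are in place, the two cases (non-graded and graded) are handled by the same one-paragraph argument.
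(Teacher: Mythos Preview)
Your argument is correct and essentially identical to the paper's: pick a local $*$-unit $u$ for $x$, observe $x,x^*\in uAu$, and use that the corner $uAu$ is a (graded) Rickart $*$-ring and hence (graded) proper. The only difference is that you spell out a few bookkeeping details (why $x^*\in uAu$, why Rickart $*$ forces properness) that the paper leaves implicit.
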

\begin{proof}
If $A$ is a locally Rickart $*$-ring, $x\in A,$ and $u$ a local $*$-unit such that $x\in uAu,$ then  $x^*\in uAu$ since $u$ is a projection. If $xx^*=0$ then the same relation holds in $uAu.$ Then $x=0$ since $uAu$ is Rickart $*$ and, as such, proper.  The graded version of the lemma follows analogously. 
\end{proof}

\begin{proposition}\label{locally_star_regular}
Let $A$ be a (graded) $*$-ring with a set of (graded) local $*$-units. Then the following conditions are equivalent.
\begin{enumerate}
\item $A$ is (graded) regular and a (graded) locally Rickart $*$-ring.

\item $A$ is (graded) regular and (graded) proper.
\end{enumerate}  
\end{proposition}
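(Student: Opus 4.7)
The plan is to prove the equivalence by treating each direction as a direct consequence of one of the results already established, namely Lemma~\ref{locally_rickart_star} for one direction and Proposition~\ref{graded_star_regular} (applied corner-by-corner) for the other. Throughout, the graded and non-graded arguments will run in parallel; one reads the statement with or without the word ``graded'' (and correspondingly restricts to homogeneous elements/projections).

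For (1)$\Rightarrow$(2), (graded) regularity is hypothesized. To obtain (graded) properness, I would simply invoke Lemma~\ref{locally_rickart_star}: by assumption, $A$ has a set of (graded) local $*$-units and is a (graded) locally Rickart $*$-ring, so it is (graded) proper. This direction therefore needs no further argument.

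For (2)$\Rightarrow$(1), the content is that (graded) regularity plus (graded) properness yields (graded) locally Rickart $*$. Let $e$ be any (homogeneous) projection in $A$; the aim is to show that the corner $eAe$ is a (graded) Rickart $*$-ring. The corner is a unital (graded) $*$-ring under the induced involution since $e^*=e$. I would verify the two standing hypotheses of Proposition~\ref{graded_star_regular} for $eAe$: first, $eAe$ is (graded) regular because for a (homogeneous) $x=exe\in eAe$ and $y\in A$ (homogeneous) with $xyx=x$, the element $eye\in eAe$ is (homogeneous) and satisfies $x(eye)x=xyx=x$; second, $eAe$ is (graded) proper because (graded) properness passes to any $*$-invariant subring, and $eAe$ is $*$-invariant. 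Then Proposition~\ref{graded_star_regular}, condition (2) implies condition (1), tells us that $eAe$ is a (graded) Rickart $*$-ring. Since this is true for every (homogeneous) projection $e$, $A$ is (graded) locally Rickart $*$, as required.

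I do not expect any real obstacle. The only point that requires a moment of care is the parallel bookkeeping between the graded and non-graded cases: in the graded setting one works only with homogeneous projections and homogeneous elements, and one must keep in mind that the local $*$-units being assumed graded ensures that Lemma~\ref{locally_rickart_star} gives graded properness rather than mere properness. Everything else is a routine transfer to corners, combined with a single application of Proposition~\ref{graded_star_regular} in its trivially-graded specialization for the non-graded case.
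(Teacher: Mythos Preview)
Your proposal is correct and follows essentially the same approach as the paper: both directions use exactly the same ingredients (Lemma~\ref{locally_rickart_star} for (1)$\Rightarrow$(2), and passing regularity and properness to corners followed by Proposition~\ref{graded_star_regular} for (2)$\Rightarrow$(1)). The only cosmetic difference is that for the non-graded case the paper cites the original result \cite[Proposition~1.13]{Berberian_web} rather than the trivially-graded specialization of Proposition~\ref{graded_star_regular}, which amounts to the same thing.
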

\begin{proof}
The condition (1) implies (2) by Lemma \ref{locally_rickart_star}. For the converse, if $A$ is (graded) regular, then each corner of $A$ is (graded) regular as well. If $p$ is a (homogeneous) projection, then $pAp$ is a (graded) $*$-ring and, since $A$ is (graded) proper, $pAp$ is also (graded) proper. Thus, $pAp$ is (graded) regular and (graded) proper. In the non-graded case, $pAp$ is a Rickart $*$-ring by the analogue of Proposition  \ref{graded_star_regular} for non-graded rings (\cite[Proposition 1.13]{Berberian_web}). In the graded case, $pAp$ is a graded Rickart $*$-ring by Proposition \ref{graded_star_regular}. 
\end{proof}  

We point out that condition (2) is the same as in the unital case (compare with Proposition \ref{graded_star_regular}). This is not surprising since a locally unital ring is ``locally regular'' if and only if it is regular. Indeed, if $A$ is regular, then each corner of $A$ is regular. Conversely, if each corner of $A$ is regular, $x\in A,$ and $u=u^2$ is such that $x\in uAu,$ then there is $y\in uAu$ such that $xyx=x$ holds in $uAu$ and, hence, in $A$ also.

\section{Positive definite Leavitt path algebras}\label{section_positive_definite}
 
After a brief review of some graph-theoretic properties and the definition of a Leavitt path algebra, we focus on certain properties of the involution of a Leavitt path algebra. Specifically, we recall \cite[Proposition 2.4]{Gonzalo_Ranga_Lia} which asserts that a Leavitt path algebra is positive definite if and only if the underlying field is positive definite, the statement we use in Proposition~\ref{Rickart_characterization} and Theorems \ref{Baer_characterization} and \ref{Baer_star_characterization}. We show that an analogous statement holds for corner skew Laurent polynomial rings and, as a consequence, we provide an alternative proof of \cite[Proposition 2.4]{Gonzalo_Ranga_Lia}.  

Let $E=(E^0, E^1, \so, \ra)$  be a directed graph where $E^0$ is the set of vertices, $E^1$ the set of edges, and $\so, \ra: E^1
\to E^0$ are the source and the range maps. 

A vertex $v$ of a graph $E$ is said to be {\em regular} if the set $\so^{-1}(v)$ is nonempty and finite. A vertex $v$ is a {\em sink} if $\so^{-1}(v)$ is empty and a {\em source} if $\ra^{-1}(v)$ is empty. A graph $E$ is \emph{row-finite} if sinks are the only vertices which are not regular,
\emph{finite} if $E$ is row-finite and $E^0$ is finite (in which case $E^1$ is necessarily finite as well), and {\em countable} if both $E^0$ and $E^1$ are countable. 

A {\em path} $a$ in a graph $E$ is a finite sequence of edges $a=e_1\ldots e_n$ such that $\ra(e_i)=\so(e_{i+1})$ for $i=1,\dots,n-1$. Such
path $a$ has length $|a|=n.$  The maps $\so$ and $\ra$ extend to paths by $\so(a)=\so(e_1)$ and $\ra(a)=\ra(e_n)$  and $\so(a)$ and $\ra(a)$ are the \emph{source} and the \emph{range} of $a$ respectively. We consider a vertex $v$ to be a \emph{trivial} path of length zero with $\so(v)=\ra(v)=v$. A path $a = e_1\ldots e_n$ is said to be \emph{closed} if $\so(a)=\ra(a)$. If $a=e_1\ldots e_n$  is a closed path and $\so(e_i) \neq \so(e_j)$ for all $i \neq j$, then $a$ is a \emph{cycle}. A cycle of length one is a {\em loop}. A graph $E$ is said to be {\em no-exit} if $\so^{-1}(v)$ has just one element for every vertex $v$ every cycle visits. 

An infinite path of a graph $E$ is a sequence of edges $e_1e_2\ldots$ such that $\ra(e_i)=\so(e_{i+1})$ for $i=1,2,\ldots$. An infinite path is an \emph{infinite sink} if it has no cycles or exits. An infinite path  \emph{ends in a sink or a cycle} if there is a positive integer $n$ such that the subpath $e_ne_{n+1}\hdots$ is either an infinite sink or is equal to the path $cc\hdots$ for some cycle $c$ of positive length.  

Extend a graph $E$ to the graph with the same vertices and with edges $E^1\cup \{e^*\ |\ e\in E^1\}$ where the range and source relations are the same as in $E$ for $e\in E^1$ and $\so(e^*)=\ra(e)$  and $\ra(e^*)=\so(e)$ for the added edges. Extend the map $^*$ to all the paths by defining $v^*=v$ for all vertices $v$ and $(e_1\ldots e_n)^*=e_n^*\ldots e_1^*$ for all paths $a=e_1\ldots e_n.$  Extend also the maps $\so$ and $\ra$ to $a^*$ by $\so(a^*)=\ra(a)$ and $\ra(a^*)=\so(a)$.  

For a graph $E$ and a field $K$, the \emph{Leavitt path algebra} $L_K(E)$ of $E$ over $K$ is the free $K$-algebra generated by the set  $E^0\cup E^1\cup\{e^*\ |\ e\in E^1\}$ such that for all vertices $v,w$ and edges $e,f,$
\begin{itemize}
\item[(V)]  $vw =0$ if $v\neq w$ and $vv=v,$

\item[(E1)]  $\so(e)e=e\ra(e)=e,$

\item[(E2)] $\ra(e)e^*=e^*\so(e)=e^*,$

\item[(CK1)] $e^*f=0$ if $e\neq f$ and $e^*e=\ra(e),$

\item[(CK2)] $v=\sum_{e\in \so^{-1}(v)} ee^*$ for each regular vertex $v.$
\end{itemize}

The first four axioms imply that $L_K(E)$ is a $K$-linear span of the elements $ab^*$ where $a$ and $b$ are paths, and that $L_K(E)$ is unital if and only if $E^0$ is finite (with the sum of elements of $E^0$ as the identity). If $E^0$ is not finite, the finite sums of vertices are the local units of $L_K(E).$   

If $K$ is a field with involution $*$ (and there is always at least one such involution, the identity), the Leavitt path algebra $L_K(E)$ becomes an involutive algebra by $(\sum_{i=1}^n k_ia_ib_i^*)^* =\sum_{i=1}^n k_i^*b_ia_i^*$ for $k_i\in K$ and paths $a_i, b_i, i=1,\ldots, n.$
The algebra $L_K(E)$ is also naturally graded by $\Z$ with the $n$-component $L_K(E)_n$ equal to the $K$-linear span of elements $ab^*$ where $a, b$  are paths with $|a|-|b|=n$. This grading is such that $L_K(E)_n^*=L_K(E)_{-n}$ for every integer $n$ so $L_K(E)$ is a graded $*$-algebra. A Leavitt path algebra can be graded by an arbitrary abelian group also (see \cite{Hazrat_graded} or \cite{Roozbeh_Lia}) and all of our results can easily be adapted to any other grading of a Leavitt path algebra. 

A Leavitt path algebra of a finite graph with no sources can be represented as a corner skew Laurent polynomial ring, a special case of a fractional skew monoid ring considered in~\cite{arafrac}. Let $R$ be a unital ring, $p$ an idempotent of $R$ and $\phi:R\rightarrow pRp$ a unital ring isomorphism. Let $p_0 =1$ and $p_n =\phi^n(p_0)$ for $n=1,2,\ldots$. A \emph{corner skew Laurent polynomial ring}  $R[t_{+},t_{-},\phi]$ is a unital ring whose elements are formal expressions
\[t^m_{-}r_{-m} +t^{m-1}_{-}r_{-m+1}+\dots+t_{-}r_{-1}+r_0 +r_1t_{+}+\dots +r_nt^n_{+},\]
where $r_{-i} \in p_i R$ for $i=1, \ldots, m,$ and $r_j \in R p_j$ for $j=0,\ldots, n$. The addition is component-wise and the multiplication is determined by the distributive law and the following rules. 
\[t_{-}t_{+} =1 \qquad\;\; t_{+}t_{-} =p \qquad\;\; rt_{-} =t_{-}\phi(r)\qquad\;\;  t_{+}r=\phi(r)t_{+}\]

Assigning $1$ to $t_{+}$ and $-1$ to $t_{-}$ gives $A=R[t_{+},t_{-},\phi]$ a $\Z$-graded ring structure with $A=\bigoplus_{n\in \Z}A_n$ for 
\[A_0 =R,\;\; A_n= Rp_nt^n_{+},\;\; A_{-n}=t^{n}_{-}p_nR,\;\; n=1,2,\ldots.\] 
If $p=1$ and $\phi$ is the identity map, then $R[t_{+},t_{-},\phi]$ reduces to the ring of Laurent polynomials $R[t,t^{-1}]$. 

If $R$ is also an involutive ring and $\phi$ is a $*$-isomorphism (which implies that $p$ is a projection), then $A$ becomes a $*$-ring by $(t^i_{-}r_{-i})^*=r_{-i}^*t^i_{+}$ and $(r_it^i_{+})^*=t^i_{-}r_i^*$ for $i=1,2,\ldots,$ and $*$ extended to $A$ so that it is additive. In this way, $A$ becomes a graded $*$-ring. The following lemma relates some involutive properties of $R[t_{+},t_{-},\phi]$ to those of the coefficient ring $R$.

\begin{lemma}\label{corner_skew_Laurent} 
Let $R$ be a unital $*$-ring, $p\in R$ a projection and $\phi:R\rightarrow pRp$ a unital $*$-ring isomorphism. 
\begin{enumerate}[\upshape(1)]
\item $R$ is proper if and only if $R[t_{+},t_{-},\phi]$ is graded proper. 

\item $R$ is positive definite if and only if $R[t_{+},t_{-},\phi]$ is positive definite.
\end{enumerate}
\end{lemma}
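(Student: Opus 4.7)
My plan is to exploit the $\Z$-grading on $A=R[t_+,t_-,\phi]$ together with the identity
\[
\phi(r) \;=\; t_+\, r\, t_- \qquad (r\in R),
\]
which follows from $t_+r=\phi(r)t_+$ and $t_+t_-=p$ (so $\phi(r)t_+t_-=\phi(r)p=\phi(r)$, as $\phi(r)\in pRp$). Iterating gives $\phi^m(r)=t_+^m r\, t_-^m$, and hence the key rearrangement $t_-^m a\, t_+^m=\phi^{-m}(p_m a p_m)$ for every $a\in R$ (well-defined since $p_m a p_m\in p_mRp_m=\phi^m(R)$). Combined with $r=rp_n$ for $r\in Rp_n$ and $s=p_m s$ for $s\in p_m R$, this lets me convert $xx^*$ for any homogeneous $x$ into an expression in $R$ of the shape $rr^*$ or $\phi^{-m}(ss^*)$.

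The implications $\Leftarrow$ in both parts are immediate, since $R=A_0$ is a $*$-subring of $A$ and any relation $\sum r_i r_i^*=0$ in $R$ is also a relation in $A$. For (1)$\Rightarrow$, take a homogeneous $x$ with $xx^*=0$. If $x\in A_0=R$ the statement is trivial. If $x=rt_+^n$ with $r\in Rp_n$, then
\[
xx^* \;=\; r\, t_+^n t_-^n\, r^* \;=\; r p_n r^* \;=\; rr^*,
\]
so properness of $R$ forces $r=0$. If $x=t_-^m s$ with $s\in p_m R$, then
\[
xx^* \;=\; t_-^m\, ss^*\, t_+^m \;=\; \phi^{-m}(p_m ss^* p_m) \;=\; \phi^{-m}(ss^*);
\]
injectivity of $\phi$ gives $ss^*=0$, and properness of $R$ then gives $s=0$.

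For (2)$\Rightarrow$, assume $R$ is positive definite and $\sum_{i=1}^n x_i x_i^*=0$ in $A$. Decompose each $x_i=\sum_j x_{i,j}$ into homogeneous components. Since $x_{i,j}x_{i,k}^*$ has degree $j-k$, the degree-$0$ part of the given identity reads
\[
\sum_{i,j} x_{i,j}\, x_{i,j}^* \;=\; 0 \qquad\text{in } R,
\]
and by the computation above each summand equals $r_{i,j}r_{i,j}^*$ (for $j\ge 0$) or $\phi^{-m}(s_{i,m}s_{i,m}^*)$ (for $j=-m<0$). Choose $M$ larger than every $m$ that appears and apply the injective $*$-homomorphism $\phi^M$ to this equation; using $\phi^M\circ\phi^{-m}=\phi^{M-m}$ it becomes
\[
\sum_k y_k\, y_k^* \;=\; 0 \qquad\text{in } R,
\]
with each $y_k$ equal to $\phi^M(r_{i,j})$ or $\phi^{M-m}(s_{i,m})$. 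Positive definiteness of $R$ forces every $y_k=0$, and injectivity of $\phi$ then yields $r_{i,j}=0=s_{i,m}$, whence $x_i=0$ for each $i$.

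The single nonroutine step is combining the grading (to isolate the degree-$0$ part of $\sum x_i x_i^*$) with the identity $\phi^m(r)=t_+^m r t_-^m$ (to recast each surviving summand in a form amenable to positive definiteness in $R$); once this bookkeeping is in place, the argument is purely a translation through the $*$-isomorphism $\phi^M$ and an application of the hypothesis on $R$.
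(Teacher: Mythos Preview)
Your proof is correct and follows essentially the same route as the paper's: both isolate the degree-$0$ component of $\sum x_ix_i^*$ and recognize each summand as either $rr^*$ or $\phi^{-m}(ss^*)$. The paper then concludes in one line by invoking ``$\phi$ is a $*$-isomorphism and $R$ is positive definite,'' whereas you make this step explicit by applying the injective $*$-homomorphism $\phi^M$ to rewrite every term in the form $y_ky_k^*$ before appealing to positive definiteness of $R$---a helpful clarification of a step the paper leaves to the reader. Your treatment of the negative-degree case in part~(1) via $t_-^m a\,t_+^m=\phi^{-m}(p_map_m)$ is likewise a clean way to handle what the paper dismisses as ``analogous.''
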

\begin{proof}
(1) If $A=R[t_{+},t_{-},\phi]$ is graded proper, then $A_0=R$ is proper. For the converse, suppose $x\in A^h$. Then $x=rt^n_{+}$ or $x=t^{n}_{-}r$ for some $r\in Rp_n$ or $r\in p_nR,$ or $x\in A_0=R$. In the first case, we have \[xx^*=rt^n_{+} t^{n}_{-}r^*=r\phi^n(p_0)r^*=rp_nr^*=rp_np_n^*r^*=(rp_n)(rp_n)^*.\] If $R$ is proper, $xx^*=0$ implies that $rp_n=0$. Then $x=rt^n_{+}=rp_nt^n_{+}=0$. The proof of the case $x=t^{n}_{-}r$ is analogous. The case $x\in R$ follows directly from the assumption. 

(2) If $A=R[t_{+},t_{-},\phi]$ is positive definite, then $R=A_0$ is also positive definite. For the converse, let $x_k \in A,$ $k=1,\ldots, l$ and  
$x_k=t^{m_k}_{-}r_{-{m_k}, k} +t^{{m_k}-1}_{-}r_{-{m_k}+1, k}+\dots+t_{-}r_{-1, k}+r_{0, k} +r_{1, k}t_{+}+\dots +r_{n_k, k}t^{n_k}_{+}$ for some 
$r_{-i,k}\in p_iR,$ $i=1,\ldots, m_k$ and $r_{j,k}\in Rp_j,$ for $j=0,\dots, n_k, k=1,\ldots, l.$ Then 
\[\phi^{-m_k}\left(r_{-m_k, k} {r}_{-m_k, k}^*\right)+\dots +\phi^{-1}\left(r_{-1, k} r_{-1, k}^*\right)+r_{0, k} r_{0, k}^* +r_{1, k} r_{1, k}^*+\dots+r_{n_k, k}r_{n_k, k}^*\]
is the constant term of $x_kx^*_k$ for $k=1,\ldots, l.$  
If $\sum_{k=1}^l x_kx^*_k=0,$ then the sum of these constant terms is zero also for $k=1,\ldots, l.$ Since $\phi$ is a $*$-isomorphism and $R$ is positive definite,  $r_{-i, k}=0$ for $i=1,\ldots, m_k,$ and $r_{j, k}=0$ for $j=0,\ldots, n_k,$ for all $k=1,\ldots,l.$ Thus $x_k=0$.  
\end{proof}

A Leavitt path algebra of a finite graph with no sources can be represented as a corner skew Laurent polynomial ring (by \cite[Lemma~2.4]{arafrac}, see also \cite{Ara_Cortinas}). Let $E$ be a finite graph with no sources and let $E^0=\{v_1,\dots,v_n\}.$ For each $i=1,\ldots,n$, we choose an edge $e_i$ such that $\ra(e_i)=v_i$ and consider $t_{+}=e_1+\ldots+e_n \in L_K(E)_1$. Then $t_{-}=t_{+}^*$ is the left inverse of $t_{+}$ and $t_{+}t_{-}$ is a homogeneous projection. By \cite[Lemma~2.4]{arafrac}, $L_K(E)=L_K(E)_0[t_{+},t_{-},\phi]$ where 
$\phi:L_K(E)_0\to t_{+}t_{-}L_K(E)_0 t_{+}t_{-},$ is given by $x\mapsto t_{+}xt_{-}.$

In \cite[Proposition 2.3]{Gonzalo_Ranga_Lia}, it is shown that the Leavitt path algebra $L_K(E)$ over a positive definite field $K$ is proper and, as a consequence, that   $L_K(E)$ is positive definite if and only if $K$ is positive definite (\cite[Proposition 2.4]{Gonzalo_Ranga_Lia}).
The proof of  \cite[Proposition 2.3]{Gonzalo_Ranga_Lia} generalizes Raeburn's result \cite[Lemma 1.3.1]{raeburn1}, stating the same claim but just for row-finite graphs without sinks. To drop these assumptions for countable graphs, the proof of \cite[Proposition 2.3]{Gonzalo_Ranga_Lia} uses the Desingularization Process and, to drop the assumption that the graph is countable, the proof uses the Subalgebra Construction. In Proposition \ref{positive_definite}, we prove the analogous result for finite graphs using Lemma~\ref{corner_skew_Laurent} and a construction of ``attaching hooks'' and extend it to row-finite graphs. The Desingularization Process and the Subalgebra Construction imply the result for arbitrary graphs as well. 

\begin{proposition}\label{positive_definite}
Let $E$ be a graph and $K$ a field. The Leavitt path algebra $L_K(E)$ is positive definite if and only if the field $K$ is positive definite.
\end{proposition}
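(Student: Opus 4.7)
The plan is to prove both directions of the biconditional, with the reverse implication carrying all the content. For the forward direction, fix any vertex $v\in E^0$: since $vv=v$ and $v^*=v,$ the map $K\to L_K(E),$ $k\mapsto kv,$ is an injective $*$-homomorphism, so $K$ sits inside $L_K(E)$ as a $*$-subring, and positive definiteness is inherited by $*$-subalgebras since the defining relation $\sum_i x_ix_i^*=0$ lifts verbatim to the ambient algebra.

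For the reverse direction, I mirror the outline given just before the statement: handle finite graphs with no sources via Lemma~\ref{corner_skew_Laurent}, reduce arbitrary finite graphs to this case by ``attaching hooks'', and then pass to row-finite and arbitrary graphs using direct limits together with the Desingularization Process and the Subalgebra Construction. Assuming $K$ is positive definite, first let $E$ be finite with no sources so that $L_K(E)=L_K(E)_0[t_{+},t_{-},\phi]$ as recalled just above the proposition. By Lemma~\ref{corner_skew_Laurent}(2) it suffices to show $L_K(E)_0$ is positive definite. But $L_K(E)_0$ is ultramatricial: it equals $\varinjlim_n L_n$ where each $L_n$ is a finite direct sum of matrix algebras $\M_k(K)$ equipped with the $*$-transpose involution (the standard description of the zero-component of a Leavitt path algebra of a finite graph). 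Positive definiteness of $K$ passes directly to each $\M_k(K)$: if $\sum_j x_jx_j^*=0$ with $x_j=(a^{(j)}_{pq})\in\M_k(K),$ then the $(p,p)$-entry of the left-hand side reads $\sum_j\sum_q a^{(j)}_{pq}(a^{(j)}_{pq})^*=0,$ forcing each $a^{(j)}_{pq}=0.$ Since positive definiteness is preserved under finite direct sums and direct limits (the defining relation involves only finitely many elements), $L_K(E)_0$ is positive definite.

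For a general finite graph $E,$ I would construct a finite sourceless graph $\tilde E$ by \emph{attaching a hook} at each source $v\in E^0$: add a new vertex $w_v,$ a new edge $f_v$ with $\so(f_v)=w_v$ and $\ra(f_v)=v,$ and a loop $\ell_v$ at $w_v.$ Only edges ranging at original sources (namely each $f_v$) or at the new vertices (each $\ell_v$) are introduced, so every regular vertex of $E$ remains regular in $\tilde E$ with the same $\so^{-1}$-set; consequently the map sending each generator of $L_K(E)$ to its namesake in $L_K(\tilde E)$ respects axioms (V), (E1), (E2), (CK1), and (CK2). The resulting graded $*$-homomorphism is injective by the Graded Uniqueness Theorem, as it is graded and nonzero on every vertex. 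Thus $L_K(E)$ embeds as a $*$-subalgebra of $L_K(\tilde E),$ which is positive definite by the previous paragraph, so $L_K(E)$ is positive definite too.

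To finish, if $E$ is row-finite I write $L_K(E)=\varinjlim L_K(F)$ with $F$ running over finite complete subgraphs and apply direct-limit stability of positive definiteness; for arbitrary graphs the Desingularization Process and the Subalgebra Construction, used in the proof of \cite[Proposition~2.3]{Gonzalo_Ranga_Lia}, reduce to the row-finite case. The step requiring the most care is verifying that the inclusion $L_K(E)\hookrightarrow L_K(\tilde E)$ is a well-defined injective $*$-homomorphism: once the Leavitt path algebra relations for $\tilde E$ are checked on the images of the generators of $L_K(E),$ injectivity follows directly from the Graded Uniqueness Theorem, but the bookkeeping around the new vertices, the new edges and the CK2 relation at $w_v$ in $\tilde E$ must be handled explicitly.
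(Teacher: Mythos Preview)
Your proof is correct and follows essentially the same approach as the paper's own proof: both use Lemma~\ref{corner_skew_Laurent} for finite sourceless graphs via the ultramatricial structure of $L_K(E)_0$, attach hooks to eliminate sources in the general finite case, and then pass to row-finite and arbitrary graphs by direct limits, desingularization, and the Subalgebra Construction. You supply a bit more detail than the paper does (the explicit entrywise verification that $\M_k(K)$ is positive definite, and the appeal to the Graded Uniqueness Theorem for injectivity of $L_K(E)\hookrightarrow L_K(\tilde E)$), but the underlying argument is the same.
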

\begin{proof}
If $L_K(E)$ is positive definite and $\sum_{i=1}^n k_ik_i^*=0$ for $k_i\in K, i=1,\ldots, n$, then $\sum_{i=1}^n k_ivk_i^*v^*=0$ for any vertex $v$ and so $k_iv=0.$ This implies $k_i=0$ since $v$ is in the basis of $L_K(E).$

Conversely, if $K$ is positive definite, then any matrix algebra over $K$ is positive definite and hence any ultramatricial algebra over $K$ is also positive definite. If $E$ is a finite graph with no sources, $L_K(E)_0$ is an ultramatricial algebra (\cite[the proof of Theorem 5.3]{Ara_Moreno_Pardo}), and so $L_K(E)_0$ is positive definite. By Lemma~\ref{corner_skew_Laurent}, $L_K(E)$ is positive definite. 

Consider a finite graph $E$ which may have sources now. Replace each source $u$ of $E$ by the vertex $v$ of the following graph $\qquad \xymatrix{\bullet^w\ar@(lu,ld)\ar[r] &\bullet^v}$ which we refer to as a ``hook'', and add the rest of the hook. The graph $E'$ obtained in this way has no sources and so $L_K(E')$ is positive definite. The inclusion $E \rightarrow E'$ induces a $*$-monomorphism $\phi:L_K(E)\rightarrow L_K(E')$. The image $\phi(L_K(E))$ is positive definite since $L_K(E')$ is positive definite. Thus $L_K(E),$ $*$-isomorphic to $\phi(L_K(E))$, is positive definite as well. 

The Leavitt path algebra of a row-finite graph is a direct limit of Leavitt path algebras of certain finite subgraphs (those which are complete in the sense of  \cite{Ara_Moreno_Pardo}) with the injective connecting maps. Thus, such algebra is also positive definite. The claim then follows for a Leavitt path algebra of an arbitrary graph using the Desingularization Process and the Subalgebra Construction just as in \cite[Proposition 2.4]{Gonzalo_Ranga_Lia}.   
\end{proof}

\section{Rickart, Baer and Baer *-Leavitt path algebras}\label{section_baer}

In this section, we present Leavitt path algebra characterizations of annihilator-related properties, their graded versions, as well as their local generalizations. We start with the characterization of Leavitt path algebras which are (locally) Rickart, graded Rickart and graded Rickart $*$-rings. 

\begin{proposition}\label{Rickart_characterization}
Let $E$ be a graph and $K$ a field. The algebra $L_K(E)$ is locally Rickart (thus also graded locally Rickart). If $K$ is positive definite, $L_K(E)$ is a graded
locally Rickart $*$-ring. 

As a corollary, the following conditions are equivalent. 
\begin{enumerate}
\item $E^0$ is finite.

\item $L_K(E)$ is a right (left) Rickart ring.

\item $L_K(E)$ is a graded right (left) Rickart ring.

\end{enumerate}
\noindent If $K$ is positive definite, these conditions are also equivalent with the following.
\begin{enumerate}
\item[(4)] $L_K(E)$ is a graded Rickart $*$-ring.
\end{enumerate}
\end{proposition}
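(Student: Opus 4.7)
The strategy is to package together several inputs, some external and some from \S \ref{section_graded_and_involutive}. The external inputs I would use are: (i) $L_K(E)$ is both left and right semihereditary, by the results of Ara--Moreno--Pardo and Ara--Goodearl cited in the introduction; (ii) $L_K(E)$ is locally unital with local units given by finite sums of distinct vertices, and since each vertex is a homogeneous projection, these sums constitute a set of graded local $*$-units; (iii) $L_K(E)$ is graded von Neumann regular, by Hazrat's theorem; and (iv) when $K$ is positive definite, $L_K(E)$ is positive definite by Proposition \ref{positive_definite}, hence proper, and \emph{in particular} graded proper.

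From (i) and (ii), Lemma \ref{semihereditary} applied on each side immediately gives that $L_K(E)$ is locally Rickart. To get graded locally Rickart, for any homogeneous idempotent $e$ the corner $eL_K(E)e$ is a unital graded ring that is Rickart by the previous step, so the last sentence of Lemma \ref{graded_Rickart_lemma} upgrades it to graded Rickart. For the graded locally Rickart $*$ assertion under the positive definiteness hypothesis on $K$, I would invoke Proposition \ref{locally_star_regular}: by (iii) and (iv) the algebra is graded regular and graded proper, and by (ii) it has a set of graded local $*$-units, so the $(2)\Rightarrow(1)$ direction of that proposition yields graded locally Rickart $*$. Notice that we never attempt to promote an idempotent to a projection directly in a Rickart-type argument; the $*$-structure enters only through the $*$-regular machinery of \S \ref{subsection_graded_star}.

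The corollary is then essentially bookkeeping. For the forward implications $(1)\Rightarrow(2),(3)$ and, under positive definiteness, $(1)\Rightarrow(4)$, finiteness of $E^0$ makes $L_K(E)$ unital with identity $\sum_{v\in E^0} v$, which is itself a homogeneous projection; applying the local assertions just proved to the corner generated by this identity delivers Rickart, graded Rickart, and graded Rickart $*$ respectively. The implication $(2)\Rightarrow(3)$ is Lemma \ref{graded_Rickart_lemma}. The reverse implications $(3)\Rightarrow(1)$ and $(4)\Rightarrow(1)$ follow from the observations recorded right after Definitions \ref{definition_graded_Rickart_and_Baer} and \ref{definition_graded_Rickart_and_Baer_star} that graded Rickart and graded Rickart $*$-rings are necessarily unital, which forces $E^0$ to be finite. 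The mild obstacle I foresee is making sure the corners used in the Rickart step are treated without asking for a $*$-structure they need not carry (corners by arbitrary, not necessarily self-adjoint, idempotents are not $*$-subrings), which is precisely why the $*$-version is routed through Proposition \ref{locally_star_regular} rather than directly through the Rickart step.
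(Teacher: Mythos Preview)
Your proposal is correct and follows essentially the same route as the paper: semihereditariness plus Lemma \ref{semihereditary} for locally Rickart, Lemma \ref{graded_Rickart_lemma} to pass to graded locally Rickart, and graded regularity plus (graded) properness fed into Proposition \ref{locally_star_regular} for the $*$-version, with the corollary reduced to the unital/non-unital dichotomy. The only cosmetic difference is that the paper cites hereditariness (from Ara--Goodearl) rather than just semihereditariness, and compresses your corner-by-corner argument for graded locally Rickart into the one-line observation that a locally Rickart graded ring is automatically graded locally Rickart.
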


\begin{proof} 
By \cite[Theorem 3.7]{Ara_Goodearl}, $L_K(E)$ is a hereditary ring and, thus, semihereditary also. By Lemma \ref{semihereditary}, $L_K(E)$ is locally Rickart. Since every graded ring which is locally Rickart is graded locally Rickart, $L_K(E)$ is graded locally Rickard. If $K$ is positive definite, then $L_K(E)$ is positive definite also by Proposition \ref{positive_definite}. The algebra $L_K(E)$ is graded regular by \cite[Theorem 9]{Hazrat_regular} and so $L_K(E)$ is a graded locally Rickart $*$-ring by Proposition \ref{locally_star_regular}.

Since $L_K(E)$ is a $*$-ring, the conditions (2) and (3) are left-right symmetric. Recall also that $L_K(E)$ is unital if and only if (1) holds. A unital, locally right Rickart ring is right Rickart so (1) implies (2). Since (2) implies (3) by Lemma \ref{graded_Rickart_lemma} and (3) implies that $L_K(E)$ is unital, which is equivalent to (1), the conditions (1), (2) and (3) are equivalent. A unital, graded locally Rickart $*$-ring is a graded Rickart $*$-ring, so (1) implies (4), and (4) implies that $L_K(E)$ is unital, which is equivalent to (1). 
\end{proof}

If $R$ is a unital ring and $\kappa$ a cardinal, let $\M_\kappa(R)$ denote the ring of infinite matrices over $A$, having rows and columns indexed by $\kappa$, with only finitely many nonzero entries. By \cite[Theorem 3.7]{AAPM}, if $E$ is a countable, row-finite graph and $K$ a field, the following conditions are equivalent. 
\begin{enumerate} 
\item $E$ is a no-exit graph such that every infinite path ends in a sink or a cycle.
\item There are countable sets $I$ and $J$ and countable cardinals $\kappa_i, i\in I,$ and $\mu_j, j\in J,$ such that 
\begin{equation}\label{nongraded_matrices}
L_K(E) \cong \bigoplus_{i\in I} \M_{\kappa_i} (K) \oplus \bigoplus_{j \in J} \M_{\mu_j} (K[x,x^{-1}]).\tag{i}
\end{equation}
\end{enumerate}
The isomorphism in formula (\ref{nongraded_matrices}) can be taken to be either ring or algebra isomorphism.  
In \cite[Corollary 32]{Zak_Lia}, it is shown that the assumption on countability of the graph $E$ can be dropped (in which case $I,$ $J,$ $\kappa_i$ and $\mu_j$ may be of arbitrary cardinalities) and that the isomorphism in formula (\ref{nongraded_matrices}) can be taken to be a $*$-isomorphism. The set $I$ corresponds to the number of sinks of $E,$ the set $J$ to the number of cycles of $E$, the cardinal $\mu_j$ in the summand $\M_{\mu_j}(K[x,x^{-1}])$ corresponds to the number of paths ending in a fixed (but arbitrary) vertex of the cycle indexed by $j\in J$ which do not contain that cycle and the cardinal $\kappa_i$ in the summand $\M_{\kappa_i} (K)$ to the number of paths ending in the sink indexed by $i\in I$ (with a bit more subtleties if the sink is infinite as explained in \cite{Zak_Lia}, but we will not need those subtleties here).  

By \cite{Roozbeh_Lia}, if $E$ is a row-finite graph such that condition (1) holds, then $L_K(E)$ is graded $*$-isomorphic to the algebra 
\begin{equation}\label{graded_matrices}
\bigoplus_{i\in I} \M_{\kappa_i} (K)(\ol\gamma_i) \oplus \bigoplus_{j \in J} \M_{\mu_j} (K[x^{n_j},x^{-n_j}])(\ol\delta_j)\tag{ii}
\end{equation}
where $I$ and $J$ can be of arbitrary cardinality, $\kappa_i, i\in I,$ and $\mu_j, j\in J,$ are cardinals, $n_j$ positive integers, $\ol\gamma_i\in \Z^{\kappa_i}$ for $i\in I,$ and $\ol\delta_j\in \Z^{\mu_j}$ for $j\in J.$ 

We consider annihilator-related properties of the matrix algebras which appear in formulas (\ref{nongraded_matrices}) and (\ref{graded_matrices}). 

\begin{lemma}\label{matrices}
Let $\kappa$ be an arbitrary cardinal and $K$ a $*$-field. 
\begin{enumerate}[\upshape(1)]
\item The algebras $\M_{\kappa} (K)$ and $\M_{\kappa} (K[x,x^{-1}])$ are locally Baer. 

\item The trivial grading of $K$ by $\Z$ and the $\Z$-grading of $K[x,x^{-1}]$ given by $K[x,x^{-1}]_m=\{kx^m \mid k\in K\}$  $m\in\Z,$ make the algebras $\M_{\kappa} (K)(\ol\gamma)$ and $\M_{\kappa} (K[x^{n},x^{-n}])(\ol\gamma)$ into graded $*$-rings for $n$ a positive integer, and $\ol\gamma\in \Z^{\kappa}.$ These algebras are graded regular and, if $K$ is positive definite, graded locally Baer $*$-rings.  

\item If $K$ is positive definite, the algebras $\M_{\kappa} (K)$ and $K[x,x^{-1}]$ are locally Baer $*$-rings.   

\item The algebra $\M_{\kappa} (K[x,x^{-1}])$ is a locally Rickart $*$-ring if and only if $\kappa =1.$
\end{enumerate}
\end{lemma}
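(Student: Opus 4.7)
The plan is to handle the four parts in sequence; the substantive step is the explicit counterexample for~(4).

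For (1), any idempotent of $\M_\kappa(R)$ has finite matrix support, so each corner $e\M_\kappa(R)e$ already equals a corner of some $\M_n(R)$. Since being Baer is Morita-invariant and passes to corners, and both $K$ (a field) and $K[x,x^{-1}]$ (a unital commutative domain, whose only annihilators are $0$ or the whole ring) are Baer, the corners of $\M_\kappa(K)$ and $\M_\kappa(K[x,x^{-1}])$ are Baer. Part~(3) follows from the same corner reduction together with the facts that $\M_n(K)$ is Baer $*$ for positive definite $K$ (every right ideal is a $K$-subspace of $K^n$ and admits an orthogonal projection built from the positive-definite Hermitian form), and that $K[x,x^{-1}]$ is itself Baer $*$ (its only annihilators, $0$ and the whole ring, are generated by the projections $0$ and $1$).

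For (2), the rings $K$ (trivially graded) and $K[x^n,x^{-n}]$ (with $(K[x^n,x^{-n}])_{nm}=Kx^{nm}$) are \emph{graded fields}: every nonzero homogeneous element is a unit. Hence any finite corner $\M_m$ over such a ring is graded semisimple, and since every homogeneous element of $\M_\kappa$ has finite support, both algebras are graded regular. When $K$ is positive definite, inspection of the constant term of $mm^*$ for a homogeneous matrix $m$ (its diagonal entries come out as $K$-sums $\sum_k a_{ik}a_{ik}^*$) shows both algebras are graded proper; Proposition~\ref{locally_star_regular} then yields graded locally Rickart $*$. Finally, every corner by a homogeneous projection has a zero-component finite-dimensional over $K$ (homogeneous projections have finite support, and the zero-component is a direct sum of finitely many one-dimensional $K$-pieces indexed by matrix positions), so no infinite orthogonal set of homogeneous projections exists in any such corner and Proposition~\ref{Rickart_star=Baer_star} upgrades to graded locally Baer $*$.

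For (4), the implication $\kappa=1\Rightarrow$ locally Rickart $*$ is contained in (3). For the converse, it suffices to exhibit inside the corner $(e_{11}+e_{22})\M_\kappa(K[x,x^{-1}])(e_{11}+e_{22})=\M_2(K[x,x^{-1}])$ an element whose right annihilator is not generated by any projection. Setting $R=K[x,x^{-1}]$ with $(kx^m)^*=k^*x^{-m}$, the proposed pair is
\[a = \begin{pmatrix} 1 & 1+x \\ 0 & 0 \end{pmatrix}, \qquad v_0 = \begin{pmatrix} -(1+x) \\ 1 \end{pmatrix}.\]
A short calculation shows $\ann_r(a) = \{b\in\M_2(R):\text{each column of }b\text{ lies in }Rv_0\}$. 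Over a commutative $*$-ring, every rank-one self-adjoint idempotent has the form $\lambda uu^*$ with $\lambda=\lambda^*$ and $\lambda(u^*u)=1$; if such a projection generates $\ann_r(a)$, its image must be $Rv_0$, so $u=v_0$ up to a unit scalar, which forces the Gram element
\[v_0^*v_0 = (1+x^{-1})(1+x)+1 = 3+x+x^{-1} = x^{-1}(x^2+3x+1)\]
to be a unit in $R$. Since the units of $K[x,x^{-1}]$ are precisely the monomials $cx^n$, it is not. A rank-two projection would generate all of $\M_2(R)\neq\ann_r(a)$, so no projection generates $\ann_r(a)$, and $\M_\kappa(K[x,x^{-1}])$ fails to be locally Rickart $*$ for $\kappa\geq 2$.

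The main obstacle is pinpointing the pair $(a,v_0)$ in (4) for which the Gram element $v_0^*v_0$ escapes the unit group of $K[x,x^{-1}]$; once such a pair is identified, both the computation of $\ann_r(a)$ and the verification that no projection can generate it are short. The remaining parts combine standard Morita and corner techniques with the previously established propositions.
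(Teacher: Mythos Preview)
Your part~(1) rests on the false claim that the Baer property is Morita invariant. It is not: $K[x,y]$ is a domain (hence Baer), yet $\M_2(K[x,y])$ is not even Rickart, since the right annihilator of $xe_{11}+ye_{12}$ would force the syzygy module $\{(s,t):xs+yt=0\}$ to be a direct summand of $R^2$, which it is not. The correct reason $\M_n(K)$ and $\M_n(K[x,x^{-1}])$ are Baer is that $K$ and $K[x,x^{-1}]$ are principal ideal domains, so $\M_n$ over them is hereditary (a property that \emph{is} Morita invariant) and noetherian, hence Rickart with no infinite family of orthogonal idempotents, hence Baer by \cite[Theorem~7.55]{Lam}. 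This is essentially the paper's argument.

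Your parts~(2) and~(3) take a more direct route than the paper, which instead realizes each matrix algebra as a Leavitt path algebra and imports graded regularity and positive definiteness from \cite{Hazrat_regular} and Proposition~\ref{positive_definite}; your graded-field and Hermitian-form arguments are correct and self-contained, and the finite-dimensionality of the zero-component of any homogeneous-projection corner is a clean way to invoke Proposition~\ref{Rickart_star=Baer_star}. For part~(4) you and the paper use the same idempotent $a=e_{11}+(1+x)e_{12}$ and the same non-unit $v_0^*v_0=3+x+x^{-1}$. The paper's argument is shorter: it invokes \cite[Proposition~1.11]{Berberian_web}, which says $eA=ee^*A$ for every idempotent $e$ in a Rickart $*$-ring, and then simply observes $e_{11}\in aA$ while $e_{11}\notin aa^*A=(3+x+x^{-1})e_{11}A$. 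Your classification of projections with image $Rv_0$ reaches the same conclusion, but the general assertion that rank-one self-adjoint idempotents over a commutative $*$-ring all have the form $\lambda uu^*$ is stated without proof; in this particular case it follows by writing any self-adjoint $p$ with columns in $Rv_0$ as $\beta v_0v_0^*$ with $\beta=\beta^*$ and using that $R$ is a domain, but you should either supply that short computation or switch to the $eA=ee^*A$ criterion.
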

\begin{proof}
The algebras $\M_n(K)$ and $\M_n(K[x,x^{-1}])$ where $n$ is a positive integer, are Baer rings. There are several ways to see this. One, for example, is to note that these are hereditary rings by showing this fact either by definition or by representing these algebras as Leavitt path algebras of appropriate row-finite graphs and noting that all such algebras are hereditary by \cite[Theorem 3.5]{Ara_Moreno_Pardo}. As hereditary rings, these algebras are also Rickart. In addition, these algebras have no infinite sets of nonzero, orthogonal idempotents. So, they are Baer by \cite[Theorem 7.55]{Lam}. The algebras $\M_{\kappa} (K)$ and $\M_{\kappa} (K[x,x^{-1}])$ as in (1) are locally Baer since the finite sums of matrix units $e_{ii},$ $i\in \kappa,$ constitute sets of local units such that the corners are isomorphic to $\M_n(K)$ and to $\M_n(K[x,x^{-1}])$ respectively.   

The algebras $\M_{\kappa} (K)(\ol\gamma)$ and $\M_{\kappa} (K[x^{n},x^{-n}])(\ol\gamma)$ as in (2) are graded regular. One could check this directly or use the  representations of these algebras as Leavitt path algebras over certain acyclic or comet graphs (see \cite{Hazrat_graded}) and then use \cite[Theorem 9]{Hazrat_regular}. If $K$ is positive definite, then these algebras are positive definite as well. This can also be checked directly or using the Leavitt path algebra representations and Proposition \ref{positive_definite}. Thus, these algebras are graded locally Rickart $*$-rings by Proposition \ref{locally_star_regular}. Proposition \ref{Rickart_star=Baer_star} implies that these algebras are graded locally Baer $*$-rings. 

To show (3), note that $\M_{\kappa} (K)$ is regular for any cardinal $\kappa$. This can also be seen directly or by representing this algebra as a Leavitt path   algebra over an acyclic graph and using \cite[Theorem 1]{Abrams_Rangaswamy}. If $K$ is positive definite, $\M_{\kappa} (K)$ is regular and proper, so $\M_{\kappa} (K)$ is a locally Rickart $*$-ring by Proposition \ref{locally_star_regular}. By Proposition \ref{Rickart_star=Baer_star}, $\M_{\kappa} (K)$ is a locally Baer $*$-ring.
The algebra $K[x,x^{-1}]$ is Baer by (1) and a Rickart $*$-ring since it is an integral domain. Thus, it is a Baer $*$-ring. 

One direction of (4) is obvious: if $\kappa=1,$ $K[x,x^{-1}]$ is a Rickart $*$-ring. To show the converse, note that if $\kappa$ is infinite, the algebra $\M_n(K[x,x^{-1}])$ where $n$ is a finite ordinal is a corner of $\M_\kappa(K[x,x^{-1}])$ generated by a projection. Thus, it is sufficient to show that if $\M_n(K[x,x^{-1}])$ is a Rickart $*$-ring then $n=1$. Assume that $A=\M_n(K[x,x^{-1}])$ is a Rickart $*$-ring. By ~\cite[Proposition~1.11]{Berberian_web}, this implies that $eA=ee^*A$ for each idempotent $e$ of $A.$ If $n>1,$ consider the idempotent $e=e_{11}+e_{12}(1+x)$. Then $ee^*=e_{11}(3+x+x^{-1})$ and $e_{11}=ee_{11}\in eA.$ Assuming that $e_{11}\in ee^*A$ implies that $e_{11}=e_{11}(3+x+x^{-1})a$ for some $a=(a_{ij})\in A$ where $a_{ij}\in K[x,x^{-1}]$ for $i,j=1,\ldots,n.$ Multiplying by $e_{11}$ on the right implies that  $e_{11}=(3+x+x^{-1})e_{11}ae_{11}.$ By equating the upper left corner of the matrices in this last equation, we obtain that $1=(3+x+x^{-1})a_{11}.$ This implies that $3+x+x^{-1}$ is invertible in $K[x,x^{-1}]$ which is a contradiction. Hence, $n=1.$  
\end{proof}

We characterize (locally) Baer, graded Baer, and graded Baer $*$ Leavitt path algebras now.  

\begin{theorem}\label{Baer_characterization}
Let $E$ be a graph and $K$ a field.  The following conditions are equivalent. 

\begin{enumerate}
\item $E$ is a row-finite, no-exit graph in which every infinite path ends in a sink or a cycle.

\item $L_K(E)$ is a locally Baer ring.

\item $L_K(E)$ is a graded locally Baer ring.
\end{enumerate}
\noindent If $K$ is positive definite, these conditions are also equivalent with the following.  

\begin{enumerate}
\item[(4)] $L_K(E)$ is a graded locally Baer $*$-ring. 
\end{enumerate}

\noindent
As a corollary, the following conditions are equivalent. 
\begin{enumerate}
\item[(1')] $E$ is a finite, no-exit graph.

\item[(2')] $L_K(E)$ is a Baer ring.

\item[(3')] $L_K(E)$ is a graded Baer ring.
\end{enumerate}
\noindent If $K$ is positive definite,  these conditions are also equivalent with the following. 

\begin{enumerate}
\item[(4')] $L_K(E)$ is a graded Baer $*$-ring. 
\end{enumerate}
\end{theorem}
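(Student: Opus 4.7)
The strategy decomposes into the structural direction $(1) \Rightarrow (2)$ and $(1) \Rightarrow (4)$, the easy implications $(2) \Rightarrow (3)$ and $(4) \Rightarrow (3)$, and the reverse direction $(3) \Rightarrow (1)$ proved by contrapositive; together with the trivial $(3) \Rightarrow (2)$-like chaining these give the full set of equivalences.

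For $(1) \Rightarrow (2)$, I would invoke the structure theorem \cite[Theorem 3.7]{AAPM}, extended to graphs of arbitrary cardinality by \cite[Corollary 32]{Zak_Lia}, to express $L_K(E)$ (up to $*$-isomorphism) as the direct sum in formula~(\ref{nongraded_matrices}). By Lemma~\ref{matrices}(1) each summand is locally Baer, and a ring-theoretic direct sum of locally Baer rings is again locally Baer: any idempotent in the direct sum has only finitely many nonzero components, so its corner decomposes as a finite direct sum of Baer corners, which is Baer. For $(1) \Rightarrow (4)$ under the positive-definiteness hypothesis, I would use the graded structure theorem (formula~(\ref{graded_matrices}) from \cite{Roozbeh_Lia}) together with Lemma~\ref{matrices}(2), with the same ``idempotent sits in finitely many components'' observation applied to homogeneous projections. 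The implications $(2) \Rightarrow (3)$ and $(4) \Rightarrow (3)$ are immediate from the facts, recorded in Section~\ref{section_graded_and_involutive}, that Baer implies graded Baer and that homogeneous projections are in particular homogeneous idempotents.

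For the hard direction $(3) \Rightarrow (1)$ I would argue by contrapositive. If $E$ violates~(1) then at least one of the following occurs: (a)~$E$ has an infinite emitter $v$; (b)~some cycle $c$ has an exit; (c)~some infinite path fails to end in a sink or a cycle. In each case I would exhibit a homogeneous idempotent $w$ (typically a single vertex) such that $wL_K(E)w$ is not graded Baer, by producing a set $X$ of homogeneous elements whose right annihilator in $wL_K(E)w$ is not generated by a homogeneous idempotent. In case~(a) the infinite orthogonal family $\{ff^{*} : f \in \so^{-1}(v)\}$ of homogeneous projections in $vL_K(E)v$ has a right annihilator whose would-be generating idempotent encodes the formal sum $\sum_{f} ff^{*}$; this element is absent from $L_K(E)$ because (CK2) is unavailable at the non-regular vertex $v$. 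In case~(b), fixing a vertex $u$ on $c$ at which an exit $f$ leaves, the corner $uL_K(E)u$ contains an $L_{2}$-type substructure produced by the pair ``cycle loop $c$ and exit $f$,'' and an explicit annihilator built from powers of $c$ together with $f$ fails to be generated by any homogeneous idempotent. In case~(c), the branching forced by repeated exits along the path yields another infinite orthogonal family of homogeneous projections whose supremum is not realizable in $L_K(E)$. The main obstacle will be case~(b): cleanly writing down the generating set $X$ and verifying that no homogeneous idempotent $e$ satisfies $\ann_{r}(X) \cap wL_K(E)w = e \cdot wL_K(E)w$.

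The corollary follows by specializing to $E^{0}$ finite. Then $L_K(E)$ is unital, so each ``locally'' property reduces to its non-local version with the global identity $\sum_{v \in E^{0}} v$ playing the role of the universal local unit. Moreover, a finite graph is automatically row-finite, and in a finite no-exit graph every infinite path must revisit a vertex, producing a closed subpath which the no-exit condition then forces to be a cycle on which the path is eventually periodic; so the third clause of~(1) becomes automatic. Hence~(1) restricted to finite $E^{0}$ is equivalent to~(1'), and the equivalences $(1') \Leftrightarrow (2') \Leftrightarrow (3') \Leftrightarrow (4')$ follow at once from the theorem.
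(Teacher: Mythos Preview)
Your overall architecture matches the paper's: the same structure theorems for $(1)\Rightarrow(2)$ and $(1)\Rightarrow(4)$, the trivial $(2)\Rightarrow(3)$ and $(4)\Rightarrow(3)$, and the contrapositive $(3)\Rightarrow(1)$ split into the same three graph-theoretic failures. The corollary argument is also fine.

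The genuine gap is in your execution of the three cases of $(3)\Rightarrow(1)$. In case~(a), for instance, you propose to take the right annihilator in $vAv$ of the \emph{full} family $\{ff^{*}:f\in\so^{-1}(v)\}$ and argue that a generating idempotent would have to ``encode'' the forbidden infinite sum $\sum_f ff^{*}$. But this annihilator can simply be zero (e.g.\ when all $f$ land at a common sink, a short computation shows no nonzero element of $vAv$ is killed by every $f_if_i^{*}$), and zero is generated by the idempotent~$0$; so no contradiction arises. The same ``missing supremum'' heuristic in case~(c) fails for the same reason, and your case~(b) sketch never names a candidate set $X$ at all.

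The device you are missing, and which the paper uses uniformly in all three cases, is to split the infinite orthogonal family $S$ into odd- and even-indexed halves $S_{\mathrm o},S_{\mathrm e}$ and take $\ann_r^{vAv}(S_{\mathrm o})$. This annihilator is guaranteed to be large (it contains all of $S_{\mathrm e}$), so a putative homogeneous generating idempotent $e$ must satisfy $e\cdot s=s$ for $s\in S_{\mathrm e}$ and $s\cdot e=0$ for $s\in S_{\mathrm o}$. One then writes $e$ in monomial form, groups the monomials according to how they interact with the basic paths of the configuration, and pushes the index $i$ large enough that most groups vanish; the surviving scalar sum is forced to be simultaneously $0$ (from the odd relations) and to reproduce a nonzero basis element (from the even relations), a contradiction. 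Without this odd/even split and the monomial bookkeeping, the argument does not go through.
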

\begin{proof}
First, we note that (1) implies (2) and, if $K$ is positive definite, (1) implies (4). Indeed, if $E$ is a graph as in (1), then $L_K(E)$ is $*$-isomorphic to an algebra as in formula (\ref{nongraded_matrices}). The algebra of the form $ \M_{\kappa_i} (K)$ or $\M_{\mu_j} (K[x,x^{-1}])$ is locally Baer by part (1) of Lemma \ref{matrices} and so $L_K(E)$ is locally Baer. If (1) holds, $L_K(E)$ is  graded $*$-isomorphic to an algebra as in formula (\ref{graded_matrices}). 
By part (2) of Lemma \ref{matrices}, if $K$ is positive definite, the algebra of the form $ \M_{\kappa_i} (K)(\ol\gamma_i)$ or $\M_{\mu_j} (K[x^{n_j},x^{-n_j}])(\ol\delta_j)$ is a graded locally Baer $*$-ring. Thus, $L_K(E)$ is a graded locally Baer $*$-ring.

Since (2) implies (3) and (4) implies (3) by definition, it remains to show that (3) implies (1).  

Let $A=L_K(E)$ be a graded locally Baer ring. We show that the following holds.  
\begin{enumerate}
\item[1.] $E$ has no cycles with exits.  
 
\item[2.] $E$ has no infinite emitters. 

\item[3.] All infinite paths of $E$ end in sinks or cycles. 
\end{enumerate}
Assuming the negation of any of the three conditions above, we shall produce a vertex $v$ such that the corner $vAv$ is not graded Baer. Recall that the corner $vAv$ is a $K$-linear span of all the elements of the form $ab^*$ where $a,b$ are paths of $E$ originating at $v$ and having the same range.

{\bf 1. $\mathbf{E}$ has no cycles with exits. } 
Suppose $E$ has a cycle $a=a_1a_2\dots a_n$ which has an exit edge $b$ so that $v=\so(b)=\so(a_1)=\ra(a_n)$. Consider the infinite set of nonzero, orthogonal, homogeneous idempotents $S=\{a^ibb^*{a^*}^i\mid i=0, 1,\ldots \}$ in $vAv$ where $a^0=v$ and the subsets 
$S_{\text{o}}=\{a^ibb^*{a^*}^i\mid i \text{ is odd} \}$ and $S_{\text{e}}=\{a^ibb^*{a^*}^i\mid i \text{ is even} \}$ of $S$. The ring $vAv$ is graded Baer and so there is a homogeneous idempotent $e\in vAv$ such that $$\ann^{vAv}_r(S_{\text{o}})=eAv.$$
Since  $S_{\text{e}} \subseteq \ann^{vAv}_r(S_{\text{o}})$,  $ a^ibb^*{a^*}^i\in eAv$, for $i$ even. Thus  
\[e \, a^ibb^*{a^*}^i=a^ibb^*{a^*}^i\mbox{ if } i \text{ is even \;\; and }\;\;a^ibb^*{a^*}^i \, e =0  \mbox{ if } i \text{ is odd.}\]

Write $e$ as $k_1x_1y_1^*+k_2x_2y_2^*+\dots+k_nx_ny_n^*$, where $x_j$ and $y_j$ are paths of the same length with $\ra(x_j)=\ra(y_j),$ $\so(x_j)=\so(y_j)=v,$ and $k_j\in K$ for $j=1, \ldots, n$. Also write $e$ as $e_1+e_2+e_3+e_4$ where 

\begin{tabular}{l}
$e_1$ is the sum of those $k_jx_jy_j^*$ with  $x_j=y_j=a^i$ for some $i=0,1,\ldots$, \\
$e_2$ is the sum of those $k_jx_jy_j^*$ where only $y_j$ is of the form $a^i$ for some $i=0,1,\ldots$, \\
$e_3$ is the sum of those $k_jx_jy_j^*$ where only $x_j$ is of the form $a^i$ for some $i=0, 1,\ldots$, \\
$e_4$ is the sum of the rest of the monomials.
\end{tabular}

We note that such representation may not be unique.
Note that the relation $(e_3+e_4)a^ib=0$ holds in $A$ for a sufficiently large $i$ by the form of the terms appearing in $e_3+e_4$ and the axiom (CK1). Thus,  the relation $(e_3+e_4)a^ibb^*{a^*}^i=0$ holds in $vAv.$ Similarly, $(e_2^*+e_4^*)a^ibb^*{a^*}^i=0$ holds in $vAv$ for a sufficiently large $i.$ Thus, 
\begin{align*}
& (e_1+e_2) a^ibb^*{a^*}^i  =a^ibb^*{a^*}^i\; \text{ for a sufficiently large even }i,\text{ and}\\
& (e_1^*+e_3^*) a^ibb^*{a^*}^i  =0\; \; \; \text{ for a sufficiently large odd }i.
\end{align*}

Choose an integer $m$ larger than the length of $x_j$ and $y_j$ for all $x_j$ appearing in $e_2$ and all $y_j$ appearing in $e_3.$ Then $a^m{a^*}^me_2=a^m{a^*}^me_3^*=0.$ 
This implies that  
\begin{align*}
& a^m{a^*}^m e_1   \, a^ibb^*{a^*}^i =a^ibb^*{a^*}^i \; \text{ for any sufficiently large, even } i>m,  \\
& a^m{a^*}^m e_1^* \, a^ibb^*{a^*}^i =0  \; \; \; \; \text{ for any sufficiently large, odd } i. 
\end{align*}

Represent $e_1$ as $\sum_{l=1}^t k_l a^{n_l} {a^*}^{n_l}$ for some positive integer $t$ and nonnegative integers $n_l,$ $l=1,\ldots, t.$ For large enough $m$ and odd $i>m,$ 
\[0=a^m{a^*}^m e_1^*\,  a^i bb^*{a^*}^i=\sum_{l=1}^t k_l^* a^m{a^*}^m \,  a^i bb^*{a^*}^i =(\sum_{l=1}^t k_l^*) a^i bb^*{a^*}^i\]
and so $(\sum_{l=1}^t k_l^*) a^i bb^*{a^*}^i a^i b=(\sum_{l=1}^t k_l^*) a^i b =0$ holds in $A.$ Since a set of paths is a linearly independent set, $\sum_{l=1}^t k_l^*=0$ and so $\sum_{l=1}^t k_l=0$ also. For large enough $m$ and even $i>m,$
\[a^ibb^*{a^*}^i=a^m{a^*}^m e_1\, a^i bb^*{a^*}^i=(\sum_{l=1}^t k_l) a^i bb^*{a^*}^i=0.\]
Thus, $0=a^ibb^*{a^*}^ia^ib=a^ib$ holds in $A$ for large enough even $i.$ This is a contradiction since a set of paths is a linearly independent set.

{\bf 2. $\mathbf{E}$ has no infinite emitters.} Suppose that $E$ has a vertex $v$ which is an infinite emitter and let $\{a_i\}_{i=1,2,\ldots}$ be a set of edges which $v$ emits.  Consider the infinite set of nonzero, orthogonal, homogeneous idempotents 
$S=\{a_ia_i^*\mid i=1,2\ldots \}$  in $vAv$ and the subsets $S_{\text{o}}=\{a_ia_i^*\mid i \text{ is odd} \}$ and $S_{\text{e}}=\{a_ia_i^*\mid i \text{ is even} \}$ of $S.$ Let $e$ be a homogeneous idempotent in $vAv$ such that 
$\ann^{vAv}_r(S_{\text{o}})=eAv.$ Then $a_ia_i^*e=0$ for $i$ odd. Since  $S_{\text{e}} \subseteq \ann^{vAv}_r(S_{\text{o}})$, $a_ia_i^*=ea_ia_i^*$ for $i$  even. 

Write $e$ as $k_1x_1y_1^*+k_2x_2y_2^*+\dots+k_nx_ny_n^*+kv$, where $x_j$ and $y_j$ are paths of the same positive length with $\ra(x_j)=\ra(y_j),$ $\so(x_j)=\so(y_j)=v,$ and $k_j, k\in K$ for $j=1,\ldots, n,$ and denote $k_1x_1y_1^*+k_2x_2y_2^*+\dots+k_nx_ny_n^*$ by $x$ so that $e=x+kv.$  Again, we do not claim that such representation is unique. 

Since there is a finite number of paths $x_j$ appearing in $x$, there is an edge $a_i$ which does not appear in $x_j$ for all $j=1,\ldots, n$ and we can choose it such that $i$ is odd. Then, $a_i^*x=0$ holds in $A$ and so $a_ia_i^*x=0$ holds in $vAv.$ Since $i$ is odd, $0=a_ia_i^*e=a_ia_i^*kv=ka_ia_i^*$. Multiplying by $a_i$ on the right, we obtain that $0=ka_i$ holds in $A$. This implies that $k=0$ since $a_i$ is in the basis of $A.$ Hence $e=x.$ 

Choose an edge $a_i$ which does not appear in $y_j$ for all $j=1,\ldots, n$ and we can choose it so that $i$ is even. Thus, $xa_i=0$ holds in $A$ and so $xa_ia_i^*=0$ holds in $vAv.$  Since $i$ is even, $a_ia_i^*=ea_ia_i^*=xa_ia_i^*=0.$ Multiplying by $a_i$ on the right, we have that $0=a_ia_i^*a_i=a_i$ holds in $A$. This is a contradiction since the edges are in the basis of $A.$ 

{\bf 3. All infinite paths of $\mathbf{E}$ end in sinks or cycles.} Let $a$ be an infinite path and assume that $a$ does not end in a sink or a cycle. We showed that $E$ is no-exit, so the only way that there is a cycle in $a$ is if $a$ ends in it. Since that is not the case, there must be an infinite number of vertices of $a$ with exits. Let us represent $a$ as  $a=a_1a_2\ldots$ for some paths $a_i$ of positive length such the range $v_i$ of each $a_i$ has an exit $b_i.$ Let $v=\so(a),$ and  $c_0=v,$ $c_1=a_1,$ $c_2=a_1a_2,$ $c_3=a_1a_2a_3, \ldots.$
Consider the infinite set of nonzero, orthogonal, homogeneous idempotents $S=\{ c_ib_ib_i^*c_i^* \mid i=1,2,\ldots\}$
and the subsets $S_{\text{o}}=\{c_ib_ib_i^*c_i^*\mid i\text{ is odd} \}$ and $S_{\text{e}}=\{c_ib_ib_i^*c_i^*\mid i\text{ is even} \}$ of $S.$ Let $e$ be a homogeneous idempotent of $vAv$ such that $\ann^{vAv}_r(S_{\text{o}})=eAv.$ 
Then $c_ib_ib_i^*c_i^*e=0$ for $i$ odd. Since $S_{\text{e}} \subseteq \ann^{vAv}_r(S_{\text{o}})$, $c_ib_ib_i^*c_i^*=e c_ib_ib_i^*c_i^*$ for $i$ even.

Write $e$ as $k_1x_1y_1^*+k_2x_2y_2^*+\dots+k_nx_ny_n^*$, where $x_j$ and $y_j$ are paths of the same length with $\ra(x_j)=\ra(y_j),$ $\so(x_j)=\so(y_j)=v,$ and $k_j\in K$ for $j=1,\ldots, n$.  Also write $e$ as $e_1+e_2+e_3+e_4$ where 

\begin{tabular}{l}
$e_1$ is the sum of those $k_jx_jy_j^*$ with $x_j=y_j=c_i$ for some $i=0,1,\ldots$,  \\
$e_2$ is the sum of those $k_jx_jy_j^*$ where only $y_j$ is of the form $c_i$ for some $i=0, 1,\ldots$, \\
$e_3$ is the sum of those $k_jx_jy_j^*$ where only $x_j$ is of the form $c_i$ for some $i=0, 1,\ldots$, \\
$e_4$ is the sum of the rest of the monomials.
\end{tabular}

Note that the relation $(e_3+e_4)c_ib_i=0$ holds in $A$ for a sufficiently large $i$ by the form of the terms appearing in $e_3$ and $e_4$ and the axiom (CK1). Thus, the relation  $(e_3+e_4)c_ib_ib_i^*c_i^*=0$ holds in $vAv.$ Similarly, $(e_2^*+e_4^*)c_ib_ib_i^*c_i^*=0$ holds in $vAv$ for a sufficiently large $i.$ So, 
\begin{align*}
& (e_1+e_2) c_ib_ib_i^*c_i^* =c_ib_ib_i^*c_i^*\; \text{for a sufficiently large even } i \text{ and}\\ 
& (e_1^*+e_3^*) c_ib_ib_i^*c_i^*=0\; \; \; \text{for a sufficiently large odd }i. 
\end{align*}

Choose an integer $m$ such that the length of $c_m$ is larger than the length of $x_j$ and $y_j$ for all $x_j$ appearing in $e_2$ and all $y_j$ appearing in $e_3.$ Then $c_mc_m^*e_2=c_mc_m^*e_3^*=0$ and so 
\begin{align*}
& c_mc_m^* e_1\, c_ib_ib_i^*c_i^*  =c_ib_ib_i^*c_i^* \; \text{ for any sufficiently large, even } i, i>m\\ 
& c_mc_m^* e_1^*\, c_ib_ib_i^*c_i^*  =0 \; \; \; \text{ for any sufficiently large, odd } i.\\ 
\end{align*}

Represent $e_1$ as $\sum_{l=1}^t k_{l} c_{n_l}c_{n_l}^*$ for some positive integer $t$ and nonnegative integers $n_l,$ $l=1,\ldots, t.$ For large enough $m$ and $i>m$ odd, we have that
\[0=c_mc_m^* e_1^*\, c_ib_ib_i^*c_i^*=\sum_{l=1}^t k_l^* c_mc_m^*\, c_ib_ib_i^*c_i^*=(\sum_{l=1}^t k_l^*) c_ib_ib_i^*c_i^*.\]
Using an analogous argument as before, this implies that $\sum_{l=1}^t k_l^*=0$ and so $\sum_{l=1}^t k_l=0.$ For large enough $m$ and $i>m$ even, 
$$c_ib_ib_i^*c_i^*=c_mc_m^* e_1\, c_ib_ib_i^*c_i^* =(\sum_{l=1}^t k_l) c_ib_ib_i^*c_i^* = 0.$$
The relation $c_ib_ib_i^*c_i^*=0$ implies a contradiction as in the previous cases.

This finishes the proof of implication (3) $\Rightarrow$ (1) and the proof of the equivalence of the conditions (1) to (4). The equivalence of (1') to (4') follows from the equivalence of (1) to (4) as follows. If (1') holds, then (1) holds as well and $L_K(E)$ is unital. The condition (1) implies (2) and, if $K$ is positive definite, (4). If $L_K(E)$ is unital, (2) and (4) imply (2') and (4'). So, (2') and (4') follow from (1'). 

The implications (2') $\Rightarrow$  (3') and (4') $\Rightarrow$ (3') clearly hold. Finally, (3') implies (3) and that $L_K(E)$ is unital so $E^0$ is finite. Since (3) implies (1), (1) holds and $E^0$ is finite. A row-finite graph with finite $E^0$ is finite, thus condition (1') holds.   
\end{proof}

We characterize Leavitt path algebras which are (locally) Baer $*$-rings now. 

\begin{theorem}\label{Baer_star_characterization}
Let $E$ be a graph and $K$ be a positive definite field. The following conditions are equivalent.

\begin{enumerate}
\item $E$ is a disjoint union of graphs which are either acyclic with each infinite path ending in a sink 
or which are isolated loops. 

\item $L_K(E)$ is a locally Baer $*$-ring. 
\end{enumerate}
\noindent As a corollary, the following conditions are equivalent. 

\begin{enumerate}
\item[(1')] $E$ is a finite disjoint union of graphs which are finite and acyclic or isolated loops. 

\item[(2')] $L_K(E)$ is a Baer $*$-ring. 
\end{enumerate}
\end{theorem}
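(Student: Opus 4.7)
The strategy is to combine Theorem~\ref{Baer_characterization} with the $*$-isomorphism of formula~(\ref{nongraded_matrices}) and the matrix-ring facts recorded in Lemma~\ref{matrices}. Since projections are idempotents, being locally Baer $*$ implies being locally Baer, so in either direction Theorem~\ref{Baer_characterization} will furnish the row-finite, no-exit structure of $E$ and hence the decomposition of $L_K(E)$ as a $*$-direct sum of matrix algebras over $K$ and over $K[x,x^{-1}]$.

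For $(1)\Rightarrow(2)$, suppose $E$ is a disjoint union of acyclic graphs and isolated loops. Every such component is row-finite and no-exit, and all infinite paths (trivially) end in sinks or cycles, so formula~(\ref{nongraded_matrices}) identifies $L_K(E)$ as a $*$-direct sum of summands of the form $\M_{\kappa_i}(K)$ (coming from the acyclic components) and $K[x,x^{-1}]$ (one summand per isolated loop). By Lemma~\ref{matrices}(3), each such summand is a locally Baer $*$-ring. An algebraic direct sum of locally Baer $*$-rings is again locally Baer $*$: any projection $p$ of the sum has finite support $F$, so $pL_K(E)p$ is the finite direct product $\bigoplus_{t\in F}p_t A_t p_t$ of Baer $*$-rings and hence itself a Baer $*$-ring.

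For $(2)\Rightarrow(1)$, Theorem~\ref{Baer_characterization} yields $L_K(E)\cong\bigoplus_{i\in I}\M_{\kappa_i}(K)\oplus\bigoplus_{j\in J}\M_{\mu_j}(K[x,x^{-1}])$ as $*$-rings. Each matrix-summand is locally $*$-unital via finite sums of its diagonal matrix units, and each such local unit $u$ is a projection of $L_K(E)$ for which $uL_K(E)u=u\M_{\mu_j}(K[x,x^{-1}])u$; hence every $\M_{\mu_j}(K[x,x^{-1}])$ is locally Rickart $*$ because $L_K(E)$ is. Lemma~\ref{matrices}(4) then forces $\mu_j=1$ for every $j\in J$. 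Graph-theoretically, if a cycle $e_1\cdots e_n$ indexed by $j$ had length $n>1$, the distinct initial segments $v_1,\,e_n,\,e_{n-1}e_n,\ldots,e_2\cdots e_n$ would all end at the fixed vertex $v_1=\ra(e_n)$, giving $\mu_j\geq n>1$; and any edge from outside the cycle terminating at the fixed vertex would contribute yet another path. Thus the cycle must be a loop whose vertex receives no external edge, and by the no-exit property its vertex also emits nothing but the loop itself, so its connected component is an isolated loop. The remaining components have no cycles and are therefore acyclic, giving~(1).

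The corollary follows immediately: a Baer $*$-ring is unital, so (2$'$) forces $E^0$ to be finite, and together with row-finiteness from Theorem~\ref{Baer_characterization} this makes $E$ finite, specializing~(1) to (1$'$); conversely, (1$'$) implies~(1) with $E^0$ finite, so $L_K(E)$ is unital and locally Baer $*$, hence Baer $*$ (as the corner at the identity). The main technical point is the combinatorial identification of the condition $\mu_j=1$ with the \emph{isolated loop} condition on a cycle rather than with merely being a \emph{loop}, which requires carefully tracking precisely which paths ending at a fixed vertex of the cycle contribute to the matrix size in the representation~(\ref{nongraded_matrices}).
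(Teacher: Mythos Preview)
Your proof is correct and follows essentially the same route as the paper's: both directions pass through Theorem~\ref{Baer_characterization} and the $*$-isomorphism~(\ref{nongraded_matrices}), then invoke Lemma~\ref{matrices}(3) for $(1)\Rightarrow(2)$ and Lemma~\ref{matrices}(4) for $(2)\Rightarrow(1)$, with the corollary handled by the unital specialisation. You supply more detail than the paper does---the direct-sum stability of the locally Baer~$*$ property and the explicit combinatorial reading of $\mu_j=1$ as ``isolated loop''---but the architecture is identical.
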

\begin{proof}
Assuming that (1) holds, $L_K(E)$ is $*$-isomorphic to an algebra as in formula (\ref{nongraded_matrices}) where $\mu_j=1$ for all $j\in J$. The matrix algebras in such representation are locally Baer $*$-rings by part (3) of Lemma \ref{matrices}. Thus, (2) holds. 

Conversely, assume that $L_K(E)$ is a locally Baer $*$-ring. Then  $L_K(E)$ is locally Baer and so $E$ is a row-finite, no-exit graph in which every infinite path ends in a sink or a cycle by Theorem~\ref{Baer_characterization}. The Leavitt path algebra $L_K(E)$ is $*$-isomorphic to an algebra as in formula (\ref{nongraded_matrices})
where $J$ is the number of cycles of $E$ and $\mu_j$ in the summand $\M_{\mu_j} (K[x,x^{-1}])$ corresponds to the number of paths ending in a fixed (but arbitrary) vertex of the cycle indexed by $j\in J.$ If $L_K(E)$ is a locally Baer $*$-ring, then each algebra $\M_{\mu_j}(K[x,x^{-1}])$ appearing in the representation (\ref{nongraded_matrices}) is such also. By part (4) of Lemma \ref{matrices}, this implies that $\mu_j=1$ for each $j$ and so the number of paths ending in any vertex of any cycle of $E$ is one. Hence, each cycle of $E$ is an isolated loop. 

The equivalence of (1') and (2') follows from the equivalence of (1) and (2) as follows. If (1') holds, then (1) holds as well and $L_K(E)$ is unital. The condition (1) implies (2) and (2) implies (2') if $L_K(E)$ is unital. So, (2') follows from (1').  Conversely, (2') implies (2) and that  $L_K(E)$ is unital so $E^0$ is finite. Since (2) implies (1), (1) holds and $E^0$ is finite. A row-finite graph with finite $E^0$ is necessarily finite, and so condition (1') holds.   
\end{proof}
To summarize, if $K$ is a positive definite field and $E$ is a connected graph, we have the following. 

{\em
\begin{center}
\begin{tabular}{|ccccc|}\hline
$L_K(E)$ is a Baer $*$-ring & $\Longrightarrow$ & $L_K(E)$ is Baer & $\Longrightarrow$ & $L_K(E)$ is Rickart\\
$\Updownarrow$&&$\Updownarrow$&&$\Updownarrow$\\
$E$ is finite and acyclic or a loop &$\Longrightarrow$& $E$ is finite and no-exit &$\Longrightarrow$& $E^0$ is finite\\ \hline
\end{tabular}
\end{center}}
Using the graph-theoretic properties in the bottom row of this diagram, it is very easy to create examples of algebras which illustrate that each implication in the first row is strict. For example, for any positive definite field, the first graph below produces an example of a Leavitt path algebra which is Baer but not a Baer $*$-ring and the second graph produces an example of a Leavitt path algebra which is Rickart but not Baer.   
$$ \xymatrix{ \bullet \ar[r] & \bullet\ar@(ru,rd) &\hskip3cm& \bullet  & \bullet\ar@(ru,rd)\ar[l] }$$ 

We reflect on the results from this section in a series of remarks.

\subsection{Remarks}\label{seven_remarks}
\begin{enumerate}[\upshape(1)]
\item The assumption that $K$ is positive definite is necessary in the relevant parts of Proposition~\ref{Rickart_characterization} and Theorem~\ref{Baer_characterization} as well as in Theorem \ref{Baer_star_characterization}. Let $K$ be any field which is not 2-proper (for example the field of complex numbers with the identity involution) nor graded 2-proper (any trivially graded and not 2-proper field is such). Then $\M_2(K)$ is not graded proper (see \S\ref{subsection_graded_star}) and thus not a graded Rickart $*$-ring. The algebra $\M_2(K)$ is graded $*$-isomorphic to the Leavitt path algebra over the graph $\xymatrix{\bullet\ar[r] &\bullet}.$
This graph satisfies conditions (1) of Proposition \ref{Rickart_characterization}, (1') of Theorem \ref{Baer_characterization} and (1') of Theorem \ref{Baer_star_characterization}. 

\item Theorem \ref{Baer_star_characterization} exhibits another difference between Leavitt path algebras and their graph $C^*$-algebra counterparts. 
An $AW^*$-algebra (a $C^*$-algebra which is a Baer $*$-ring) is separable if and only if it is finite dimensional. Since a graph $C^*$-algebra of a countable graph is separable (see~\cite[Remark~1.26]{raeburn}) and it is finite dimensional if and only if the associated graph is finite and acyclic (by~\cite[Proposition~3.4]{abramstomforde}),  
\begin{center}{\em 
a graph $C^*$-algebra $C^*(E)$ is a Baer $*$-ring if and only if $E$ is finite and acyclic. }
\end{center}
This contrasts Theorem~\ref{Baer_star_characterization} and, if $E$ is a loop $ \xymatrix{ \bullet\ar@(ru,rd)}\;\;\;\;\;$
and we consider $\C$ with the complex-conjugate involution, $L_\C(E)$ is a Baer $*$-ring while $C^*(E)$ is not. 

The referee of the paper noted a more direct argument for the fact that $L_\C(E)$ is a Baer $*$-ring while $C^*(E)$ is not: there are no nontrivial annihilators in the Laurent polynomial ring so it is trivially a Bear $*$-ring. On the other hand, the algebra of continuous functions on a sphere $C(S^1)$ has many nontrivial and proper annihilators, but only contains two projections, the constant functions 0 and 1.  

\item Every right (left) ideal generated by an idempotent in a $C^*$-algebra can be generated by a projection (\cite[Proposition IV.1.1]{Davidson}). Thus, for the graph $C^*$-algebras, being Rickart is equivalent with being a Rickart $*$-ring and being Baer is equivalent with being a Baer $*$-ring. The Leavitt path algebras do not have either of these two properties. Indeed, if $E$ is the following graph $\xymatrix{ \bullet \ar[r] & \bullet\ar@(ru,rd) }\;\;\;\;$
and we consider $\C$ with the complex-conjugate involution, then $L_\C(E)$ is Baer and not Baer $*.$ Thus, 
\begin{center}
\begin{tabular}{l}
{\em $C^*(E)$ is Baer if and only if $C^*(E)$ is a Baer $*$-ring}\hskip1.2cm while\\
{\em $L_K(E)$ can be Baer and not a Baer $*$-ring even if $K$ is positive definite.}
\end{tabular}
 
\end{center}
In \S\ref{section_questions}, we exhibit a graph $E$ such that $L_\C(E)$ is Rickart but not a Rickart $*$-ring. 

\item The proof of Theorem \ref{Baer_characterization} reveals another difference between the properties of Leavitt path algebras and the properties of $C^*$-algebras.
Recall that a Rickart $C^*$-algebra is a $C^*$-algebra which is a Rickart $*$-ring. 
As it was pointed out in the introduction to \cite{Kaplansky}, the annihilator of a countable set of elements of such algebra is generated by a projection. This does not have to hold for a Leavitt path algebra which is a Rickart $*$-ring. Indeed, if $E$ is a graph with two vertices $v$ and $w$ and countably many edges $e_n, n=1,2,\ldots,$ from $v$ to $w,$ 
$$\xymatrix{{\bullet}^{v} \ar@{.} @/_1pc/ [r] _{\mbox{ } } \ar@/_/ [r] \ar [r] \ar@/^/ [r] \ar@/^1pc/ [r] & {\bullet}^{w}}$$ 
and $K$ is a positive definite field, then $L_K(E)$ is regular (by \cite[Theorem 1]{Abrams_Rangaswamy}) and proper and hence a Rickart $*$-ring. However, the set of elements $e_ne_n^*$ for $n$ odd, is a countable set  which is not generated by a homogeneous idempotent by Theorem \ref{Baer_characterization} nor by any idempotent as an appropriate modification of the proof of Theorem \ref{Baer_characterization} can show. This example also exhibits a Leavitt path algebra which is a Rickart $*$-ring but not Baer by Theorem \ref{Baer_characterization}.  

This example and the one from remark (1), exhibiting a Leavitt path algebra which is Baer but not a Rickart $*$-ring, show that the classes of Rickart $*$-rings and Baer rings are independent of each other for Leavitt path algebras.

\item By Theorem \ref{Baer_characterization}, a Leavitt path algebra is graded (locally) Baer if and only if it is (locally) Baer. However, using the graph-theoretic properties of Theorems \ref{Baer_characterization} and \ref{Baer_star_characterization}, it is easy to construct an example of a Leavitt path algebra which is a graded Baer $*$-ring and not a Baer $*$-ring.

This shows that the graded structure of Leavitt path algebras is less expected than it may seem at first: if (P) is the property of being Baer and (Q) the property of being a Baer $*$-ring then  {\em
\begin{center}
\begin{tabular}{l}
a Leavitt path algebra has (P)  if and only if it has graded (P)\\
a Leavitt path algebra can have graded (Q) but not have (Q).
\end{tabular}
\end{center}}
\noindent The last sentence is also true if (P) is the property of being Rickart (by Proposition \ref{Rickart_characterization}) and (Q) is the property of being regular (by \cite[Theorem 9]{Hazrat_regular} and \cite[Theorem 1]{Abrams_Rangaswamy}) or being a Rickart $*$-ring (as we show in \S \ref{section_questions}). 

\item An open conjecture of Handelman states that a $*$-regular ring is directly finite and unit-regular  (\cite[Question~48, p. 349]{Goodearl_book}). By results of  \cite{Gonzalo_Ranga_Lia} and \cite{Lia_Traces}, a counterexample to this conjecture cannot be found in the class of Leavitt path algebras. However, one can show that 
\begin{center}
{\em the graded version of Handelman's Conjecture fails}. 
\end{center}

Recall that a graded ring is \emph{graded unit-regular} if for any homogeneous element $x$, there is an invertible homogeneous element $y$ such that $xyx=x$. A graded ring is {\em graded directly finite} if for any homogeneous elements $x$ and $y,$ $xy=1$ implies $yx=1.$ Consider $E$ to be the graph  $\;\;\;\;\xymatrix{\ar@(lu,ld)\bullet\ar@(ru,rd) }\;\;\;\;\;$ with one vertex and two edges $a$ and $b.$  
If $K$ is a positive definite field, the algebra $L_K(E)$ is graded regular and proper by  \cite[Theorem 9]{Hazrat_regular} and Proposition~\ref{positive_definite}, and so it is graded $*$-regular by Proposition \ref{graded_star_regular}. However, there is no homogeneous invertible element $y$ such that $aya=a$ so $L_K(E)$ is not graded unit-regular. Also, $a^*a=1$ but $aa^*\neq 1$ so the algebra is not graded directly finite. 

\item Every Baer (and Baer $*$) ring can be decomposed into five factors, each of one of the five types described in \cite[\S 15]{Berberian} and \cite[\S 8]{Berberian_web}. Since rings of three of the five types are not directly finite and all Baer Leavitt path algebras are directly finite by \cite[Theorem 4.12]{Lia_Traces}, just two types, called $I_f$ and $II_1,$ are possible. However, the algebras of the form $\M_n(K)$ and $\M_n(K[x,x^{-1}])$ are of type $I_f$ since their identities are faithful finite projections (by definitions in \cite[\S 15]{Berberian} and \cite[\S 8]{Berberian_web}). Thus,
\begin{center}
{\em a Baer Leavitt path algebra is a Baer ring of type $I_f.$} 
\end{center}
\end{enumerate}

\section{Leavitt path algebras which are Rickart *-rings}\label{section_questions}

The previous section provides Leavitt path algebra characterizations of all annihilator-related properties we introduced {\em except} the property of being a (locally) Rickart $*$-ring. In this section we provide some input possibly leading to such characterization. 

Recall that a unital $*$-ring $A$ is said to be $*$-\emph{symmetric} if $1+xx^*$ is invertible in $A$ for each $x\in A.$ In a ring $A$ with this property, for each idempotent $e,$ there is a projection $p$ such that $eA = pA$ (\cite[Lemma 1.34]{Berberian_web}). Thus, a $*$-symmetric ring $A$ is Rickart if and only if $A$ is Rickart $*$ and $A$ is Baer if and only if $A$ is Baer $*$. We prove a lemma which helps us detect Leavitt path algebras which are not Rickart $*$-rings.  

\begin{lemma}\label{matrix_lemma}
If $A$ is a $*$-ring which is not $*$-symmetric, then $\M_n(A)$ is not a Rickart $*$-ring for any integer $n>1.$  
\end{lemma}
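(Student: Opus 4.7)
The plan is to prove the contrapositive: if $\M_n(A)$ is a Rickart $*$-ring for some $n>1$, then every element of the form $1+xx^*$ in $A$ is invertible, i.e.\ $A$ is $*$-symmetric.

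First I would reduce to the case $n=2$. The matrix $e=e_{11}+e_{22}$ is a projection in $\M_n(A)$, and $e\M_n(A)e$ is $*$-isomorphic to $\M_2(A)$. Since the Rickart $*$ property is inherited by corners generated by projections (as discussed in \S\ref{subsection_local}), it suffices to show: if $\M_2(A)$ is a Rickart $*$-ring, then $1+xx^*$ is invertible for every $x\in A$.

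Fix $x\in A$ and test the Rickart $*$ hypothesis on the element $\beta=xe_{11}+e_{21}\in\M_2(A)$. A direct computation shows
\[
\ann_l(\beta)=\left\{\begin{pmatrix} a & -ax \\ c & -cx \end{pmatrix} : a,c\in A\right\}.
\]
By hypothesis $\ann_l(\beta)=\M_2(A)\,q$ for a projection $q$. Matching the entries of $q$ against the explicit form above and using $q=q^*$ pins $q$ down to
\[
q=v(a\,e_{11})v^*, \qquad v=e_{11}-x^*e_{21},
\]
for some self-adjoint $a\in A$. A short calculation then gives $v^*v=(1+xx^*)e_{11}$, and expanding $q^2$ together with the injectivity of the map $y\mapsto v(ye_{11})v^*$ on $A$ (check the $(1,1)$-entry) turns $q=q^2$ into the identity
\[
a(1+xx^*)a=a
\]
in $A$, so $a$ is a generalized inverse of $1+xx^*$.

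The remaining step is to upgrade this to an actual inverse, using the generation condition $\ann_l(\beta)=\M_2(A)q$. The element $v^*=e_{11}-xe_{12}$ annihilates $\beta$ on the left, so $v^*=\delta q$ for some $\delta\in\M_2(A)$. Comparing the $(1,1)$-entries of both sides yields $\xi a=1$ in $A$, where $\xi=\delta_{11}-\delta_{12}x^*$; taking $*$ and using $a=a^*$ gives $a\xi^*=1$, so $a$ has both a left and a right inverse in $A$ and is therefore invertible. Then $a(1+xx^*)a=a$ together with invertibility of $a$ forces $1+xx^*=a^{-1}$, contradicting the non-$*$-symmetry of $A$. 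The hard part will be the parameterization step: recognizing that the self-adjointness of $q$, combined with the explicit description of $\ann_l(\beta)$, forces $q=v(ae_{11})v^*$ with a single self-adjoint parameter $a\in A$; once this is in place, the projection identity $q=q^2$ becomes a clean partial-inverse relation for $1+xx^*$ and the generation condition $v^*\in\M_2(A)q$ bootstraps it to genuine invertibility.
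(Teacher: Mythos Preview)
Your argument is correct, but it takes a substantially longer route than the paper. The paper's proof is three lines: it invokes \cite[Proposition~1.11]{Berberian_web}, which says that in a Rickart $*$-ring $R$ one has $eR=ee^*R$ for every idempotent $e$. Given $x\in A$ with $1+xx^*$ not invertible, the paper sets $p=e_{11}+xe_{12}\in\M_n(A)$ (an idempotent for any $n>1$), notes that $pp^*=(1+xx^*)e_{11}$, and observes that $e_{11}=pe_{11}\in p\M_n(A)$ while $e_{11}\notin pp^*\M_n(A)$ since $1+xx^*$ has no right inverse (and hence, being self-adjoint, no inverse). This already violates the Berberian criterion.

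Your proof essentially rebuilds that criterion by hand: the element $v^*=e_{11}-xe_{12}$ is (up to the sign of $x$) the same idempotent the paper uses, and your relation $v^*v=(1+xx^*)e_{11}$ is exactly the paper's $pp^*$. Instead of citing the criterion, you go through the full projection/annihilator bookkeeping to reach the same contradiction. What you gain is self-containment (no external reference needed); what you lose is brevity. A minor quibble: the identity $a(1+xx^*)a=a$ says that $1+xx^*$ is a generalized inverse of $a$, not the other way around as you phrased it, but this does not affect the argument. Your preliminary reduction to $n=2$ via corners is fine but unnecessary, since the paper's idempotent lives in the top-left $2\times 2$ block anyway.
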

\begin{proof}
Let $n>1$ and let $x\in A$ be such that $1+xx^*$ is not invertible. If $p=e_{11}+xe_{12},$ then $p$ is an idempotent such that $e_{11}=pe_{11}\in pA$ and $e_{11}\notin pp^*A$ since $1+xx^*$ is not invertible. Thus, $A$ is not a Rickart $*$-ring since \cite[Proposition~1.11]{Berberian_web} implies that $eR=ee^*R$ for each idempotent $e$ of a Rickart $*$-ring $R.$
\end{proof}

If $K$ is any $*$-field, $K[x,x^{-1}]$, with the involution as in Lemma \ref{matrices}, is not $*$-symmetric since $1+(1+x)(1+x^{-1})=3+x+x^{-1}$ is not invertible. Note that this argument has been used in the proof of part (4) of Lemma \ref{matrices}. In fact, Lemma \ref{matrix_lemma} provides an alternative proof of part (4) of Lemma \ref{matrices}.  

We use Lemma \ref{matrix_lemma} to exhibit a Leavitt path algebra which is Rickart but not a Rickart $*$-ring. Let $T$ denote the Toeplitz algebra. We can represent this algebra as a Leavitt path algebra over the graph $E_T$ with two vertices $u$ and $v$ below.  
$$ \xymatrix{ \bullet^u  & \bullet^v\ar@(ru,rd)\ar[l] }$$ 
The property of being $*$-symmetric is passed onto quotients. Since $K[x,x^{-1}]$, $*$-isomorphic to the Leavitt path algebra of the quotient graph $E_T/\{u\},$ is a quotient of $T$ and $K[x,x^{-1}]$ is not $*$-symmetric, $T$ is not $*$-symmetric also. Thus, $\M_n(T)$ is not a Rickart $*$-ring for any $n>1$ by Lemma \ref{matrix_lemma}. Note that $\M_n(T)$ can be represented as the Leavitt path algebra of the graph obtained by attaching a line of length $n-1$ to both vertices of the graph $E_T.$ For example, $\M_2(T)$ is $*$-isomorphic to the Leavitt path algebra of the graph below. 
$$ \xymatrix{ \bullet  & \bullet\ar@(ru,rd)\ar[l]\\
\bullet\ar[u]&\bullet\ar[u]}$$ 
This construction and Lemma \ref{matrix_lemma} exhibit Leavitt path algebras which are Rickart rings but not Rickart $*$-rings even over a positive definite field. 

We finish the paper with the following problem.
\begin{center}
{\em Find a graph property which characterizes Leavitt path algebras which are (locally) Rickart $*$-rings. } 
\end{center}
In other words, if $E$ is a connected graph and $K$ a positive definite field, find a property replacing the question mark in the diagram below. 
This diagram also summarizes the relationships between the classes of Leavitt path algebras (LPAs). As we demonstrated, all the implications in this diagram are strict. 
{\small
$$\xymatrix{&\text{
\begin{tabular}{c}Baer LPAs\\ $\Updownarrow$\\ $E$ is finite and no-exit 
\end{tabular}
}\ar@2[dr]&\\ \text{
\begin{tabular}{c}
Baer $*$ LPAs \\ $\Updownarrow$\\ $E$ is finite and acyclic or a loop 
\end{tabular} }\ar@2[ur]\ar@2[dr] && \text{
\begin{tabular}{c}
Rickart LPAs \\ $\Updownarrow$\\ $E^0$ is finite
\end{tabular}}\\&\text{
\begin{tabular}{c}
Rickart $*$ LPAs \\ $\Updownarrow$\\$E$ is ? 
\end{tabular}}\ar@2[ur]&}$$
}

\end{document}